\newtheorem{theorem}{Theorem}[section]
\newtheorem{observation}[theorem]{Observation}
\newtheorem{proposition}[theorem]{Proposition}
\newtheorem{lemma}[theorem]{Lemma}
\newtheorem{corollary}[theorem]{Corollary}
\newcommand{\diam}{{\rm diam}}
\newcommand{\opack}{\rho^{\rm o}}
\def\ST{\widehat{S}}
\tikzstyle{vertex}=[circle, draw, inner sep=0pt, minimum size=6pt]
\begin{document}

\title{Injective colorings of Sierpi\'nski-like graphs and Kneser graphs}

\author{Bo\v{s}tjan Bre\v{s}ar$^{a,b}$\thanks{\texttt{bostjan.bresar@um.si}}
\and Sandi Klav\v zar$^{a,b,c,}$\thanks{\texttt{sandi.klavzar@fmf.uni-lj.si}}
    \and Babak Samadi$^{b,}$\thanks{\texttt{babak.samadi@imfm.si}}
\and Ismael G. Yero$^{d,}$\thanks{\texttt{ismael.gonzalez@uca.es}}
}
\maketitle

\begin{center}
$^a$ Faculty of Natural Sciences and Mathematics, University of Maribor, Slovenia\\
\medskip
	
$^b$ Institute of Mathematics, Physics and Mechanics, Ljubljana, Slovenia\\
\medskip
	
$^c$ Faculty of Mathematics and Physics, University of Ljubljana, Slovenia\\
\medskip

$^d$ Departamento de Matem\'{a}ticas, Universidad de C\'adiz, Algeciras Campus, Spain \\
\medskip
\end{center}

\begin{abstract}
Two relationships between the injective chromatic number and, respectively, chromatic number and chromatic index, are proved. They are applied to determine the injective chromatic number of Sierpi\'nski graphs and to give a short proof that Sierpi\'nski graphs are Class $1$. Sierpi\'nski-like graphs are also considered, including generalized Sierpi\'nski graphs over cycles and rooted products. It is proved that the injective chromatic number of a rooted product of two graphs lies in a set of six possible values. Sierpi\'nski graphs and Kneser graphs $K(n,r)$ are considered with respect of being perfect injectively colorable, where a graph is  perfect injectively colorable if it has an injective coloring in which every color class forms an open packing of largest cardinality. In particular, all Sierpi\'nski graphs and Kneser graphs $K(n, r)$ with $n \ge 3r-1$ are perfect injectively colorable graph, while $K(7,3)$ is not.
\end{abstract}

\noindent
{\bf Keywords:} injective coloring, injective chromatic number,  perfect injectively colorable graph, Sierpi\'nski graph, Kneser graph, rooted product graph \\

\noindent
{\bf AMS Subj.\ Class.\ (2020)}:  05C15, 05C69, 05C76

\section{Introduction}

Throughout the paper, we consider $G$ as a finite simple graph with vertex set $V(G)$ and edge set $E(G)$. The (\textit{open}) {\em neighborhood} of a vertex $v$ is denoted by $N_{G}(v)$, and $N_{G}[v]=N_{G}(v)\cup \{v\}$ is its {\em closed neighborhood} (we omit the index $G$ if the graph $G$ is clear from the context). The {\em minimum} and {\em maximum degrees} of $G$ are denoted by $\delta(G)$ and $\Delta(G)$, respectively. For terminology and notation not explicitly defined here, we refer to \cite{we}.

Recall that a (\textit{vertex}) \textit{coloring} of $G$ is a labeling of the vertices of $G$ so that any two adjacent vertices have distinct labels. The \emph{chromatic number} of $G$, denoted $\chi(G)$, is the smallest number of labels in a coloring of $G$. For some additional information on coloring problems, we refer the reader to~\cite{JT}.

A function $f:V(G)\rightarrow\{1,\dots,k\}$ is an {\em injective $k$-coloring} if no vertex $v$ is adjacent to two vertices $u$ and $w$ with $f(u)=f(w)$. For an injective $k$-coloring $f$, the set of color classes $\big{\{}\{v\in V(G)\mid f(v)=i\}:\,1\leq i\leq k\big{\}}$ is also called an \emph{injective $k$-coloring} of $G$ (or simply an {\em injective coloring} if $k$ is clear from the context). The minimum $k$ for which a graph $G$ admits an injective $k$-coloring is the {\em injective chromatic number} of $G$, and is denoted by $\chi_{i}(G)$. An injective $k$-coloring for which $k=\chi_{i}(G)$ is called a $\chi_{i}(G)$-\textit{coloring}. The study of injective coloring was initiated in \cite{hkss}, and then intensively pursued, see, for example,~\cite{bu-2009, cky-2010, lst-2009, pp}. In particular, the injective colorings of some products and graphs operations have been studied in \cite{bkr2,SSY,sy}.

A set $B\subseteq V(G)$ is an {\em open packing} in $G$ if $N(u)\cap N(v)=\emptyset$ for all distinct vertices $u,v\in B$, and the maximum cardinality of an open packing in $G$ is the {\em open packing number}, $\opack(G)$, of $G$. An open packing of cardinality $\opack(G)$ is an {\em $\opack(G)$-set}. The concept was introduced in~\cite{hs-1999}, and was studied in several papers mainly due to its relation with total domination. It was noticed in~\cite{BSY} that an injective coloring of a graph $G$ is equivalent to a partition of $V(G)$ into open packings, i.e., the vertices colored with a same color in the injective coloring form an open packing in $G$. In connection with this, the following concept was introduced in~\cite{BSY}, and further on, partially investigated for hypercubes in~\cite{bkr2}. A graph $G$ is \emph{perfect injectively colorable} if it admits an injective coloring in which every color class forms an open packing of maximum cardinality. Note that such an injective coloring is necessarily a $\chi_i(G)$-coloring.

Section 2 is devoted to two auxiliary lemmas based on establishing some helpful relationships between injective coloring and, respectively, vertex coloring and edge coloring. They will be efficiently used in Section 3 in order to prove for each Sierpi\'nski graph $S_p^n$ that $(i)$ $\chi_i(S_p^n)=p=\Delta(S_p^n)$ with $p\geq3$ and $n\geq1$, and that $(ii)$ $S_p^n$ belongs to Class $1$ with $n,p\geq2$. (Recall that a simple graph $G$ is Class 1 if $\chi'(G)=\Delta(G)$, in which $\chi'$ stands for the edge-chromatic number.) Note that the assertion $(ii)$ was already proved by Hinz and Parisse in 2012 (\cite{HP-2012}). However, our proof is much shorter based on the present, different approach.

Injective coloring of the rooted product graph $G\circ_{v}H$, as a Sierpi\'nski-type product graph, is discussed in Section 4. It is readily seen that $\chi_{i}(G\circ_{v}H)$ can be bounded from below and above by $\max\{\chi_i(G),\chi_i(H)\}$ and $\chi_i(G)+\chi_i(H)$, respectively. We prove that $\chi_{i}(G\circ_{v}H)$ only assumes $6$ values from this interval. And, as an immediate result, this leads to a closed formula for this parameter in the case of corona product graphs given in \cite{SSY}.

We also investigate the perfect injectively colorability of Sierpi\'nski graphs and Kneser graphs. It is proved that each Sierpi\'nski graph $S_p^n$ with $p\geq3$ and $n\geq1$ is perfect injectively colorable, while this is not the case for generalized Sierpi\'nski graphs by giving a special counterexample. Finally, we prove that all Kneser graphs $K(n,r)$ with $n\geq3r-1$ are perfect injectively colorable. Moreover, this is a best possible result as the Kneser graph $K(7,3)$ does not satisfy this property.

\section{Two lemmas on injective colorings versus (edge) colorings}
\label{sec:two-coloring-connections}

In this section we prove two relationships between the injective chromatic number and, respectively, chromatic number and chromatic index. Their proofs are not difficult, but we will later demonstrate that the results can be very useful.

For the first result,  consider the following concept. Let $G$ be a graph. A collection ${\cal C}= \{C_1,\ldots,C_k\}$ of cliques in $G$ is an {\em edge clique cover} of $G$ if every edge of $G$ belongs to some $C_i\in {\cal C}$. For more information on edge clique covers see the survey~\cite{schwartz-2022} and recent papers~\cite{chu-2024, nguyen-2024}. We say that an edge clique cover $\cal C$ is {\em sparse} if every vertex of $G$ belongs to at most two cliques in $\cal C$. Note that not every graph has a sparse edge clique cover.  For instance, among the triangle-free graphs $G$ only the graphs with $\Delta(G)\le 2$ admit sparse edge clique covers.

Let $\cal G$ be the class of graphs that admit a sparse edge clique cover.  If $G\in \cal G$ and ${\cal C}= \{C_1,\ldots,C_k\}$ is a sparse edge clique cover of $G$, then we introduce the graph $G^{\cal C}$ constructed from $G$ as follows. First, considering the vertex sets of the cliques $C_i$ to be pairwise disjoint in $G^{\cal C}$, we set $V(G^{\cal C})=\bigcup_{i=1}^k{V(C_i)}$. Note that by this convention, $|V(G^{\cal C})| = \sum_{i=1}^k |V(C_i)|$. Second, two vertices in $G^{\cal C}$ are adjacent if they are either in the same clique from $\cal C$ or they correspond to the same vertex from two cliques of ${\cal C}$. See Fig.~\ref{fig:example-GC} for an example of this construction.

\begin{figure}[ht!]
\begin{center}
\begin{tikzpicture}[scale=1.0,style=thick,x=1cm,y=1cm]
\def\vr{3pt}
\begin{scope}[xshift=0cm, yshift=0cm] 
\coordinate(x1) at (-0.7,0.5);
\coordinate(x2) at (0,0);
\coordinate(x3) at (0,1);
\coordinate(x4) at (1,0);
\coordinate(x5) at (1,1);
\coordinate(x6) at (2,0);
\coordinate(x7) at (2,1);
\coordinate(x8) at (2.7,0.5);
\draw (x1) -- (x2) -- (x4) -- (x6) -- (x7) -- (x5) -- (x3) -- (x1);
\draw (x2) -- (x3) -- (x4) -- (x5) -- (x2);
\draw (x4) -- (x7) -- (x8);
\draw (x5) -- (x6);
\foreach \i in {1,...,8}
{
\draw(x\i)[fill=white] circle(\vr);
}
\node at (1,-1) {$G$};
\draw (-0.3,0.5) ellipse (0.7cm and 1.0cm);
\draw (0.5,0.5) ellipse (0.85cm and 1.0cm);
\draw (1.5,0.5) ellipse (0.85cm and 1.0cm);
\draw[rotate=-33] (1.6,1.9) ellipse (0.85cm and 0.4cm);
\end{scope}
\begin{scope}[xshift=5cm, yshift=0cm] 
\coordinate(x1) at (-0.7,0.5);
\coordinate(x2) at (0,0);
\coordinate(x3) at (0,1);
\coordinate(x4) at (1,0);
\coordinate(x5) at (1,1);
\coordinate(x6) at (2,0);
\coordinate(x7) at (2,1);
\coordinate(x8) at (3,0);
\coordinate(x9) at (3,1);
\coordinate(x10) at (4,0);
\coordinate(x11) at (4,1);
\coordinate(x12) at (5,1);
\coordinate(x13) at (5.7,0.5);
\draw (x1) -- (x2) -- (x4) -- (x6) -- (x8) -- (x10) -- (x11) -- (x9)-- (x7) -- (x5) -- (x3) -- (x1);
\draw (x4) -- (x5) -- (x6) -- (x7) -- (x4);
\draw (x8) -- (x11) -- (x10) -- (x9) -- (x8);
\draw (x2) -- (x3);
\draw (x11) -- (x12) -- (x13);
\foreach \i in {1,...,13}
{
\draw(x\i)[fill=white] circle(\vr);
}
\node at (2.5,-1) {$G^{\cal C}$};
\draw (-0.3,0.5) ellipse (0.7cm and 1.0cm);
\draw (1.5,0.5) ellipse (0.85cm and 1.0cm);
\draw (3.5,0.5) ellipse (0.85cm and 1.0cm);
\draw[rotate=-33] (4.0,3.5) ellipse (0.85cm and 0.4cm);
\end{scope}
\end{tikzpicture}
\caption{A graph $G$, its sparse edge clique cover ${\cal C}$ (consisting of the circled cliques), and the derived graph $G^{\cal C}$}
\label{fig:example-GC}
\end{center}
\end{figure}
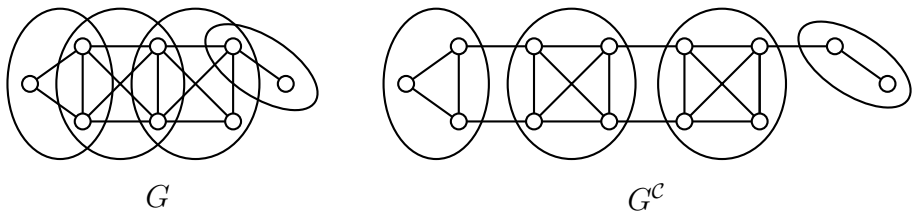

Our first result now reads as follows.

\begin{lemma}
\label{lem:vertexcoloring}
If $G\in {\cal G}$ with a sparse edge clique cover ${\cal C}= \{C_1,\ldots,C_k\}$, then $\chi_i(G^{\cal C})\le \chi(G)$.
\end{lemma}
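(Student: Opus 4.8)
The plan is to lift an optimal proper vertex coloring of $G$ to $G^{\cal C}$ in the most naive way, and then to check that sparseness of $\cal C$ is exactly what makes the lifted coloring injective. Concretely, let $c\colon V(G)\to\{1,\dots,\chi(G)\}$ be a proper coloring of $G$. Recall that each vertex $v\in V(G)$ gives rise to one or two vertices of $G^{\cal C}$ (its \emph{copies}), according to whether $v$ lies in one or two cliques of $\cal C$. I would define a coloring $c'$ of $G^{\cal C}$ by giving every copy of $v$ the color $c(v)$; this uses at most $\chi(G)$ colors, so it suffices to show that $c'$ is an injective coloring of $G^{\cal C}$.

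To do so, fix an arbitrary vertex of $G^{\cal C}$, say the copy $v_i$ of $v$ lying in the clique $C_i$, and list its neighbors. By the definition of $G^{\cal C}$ they are of two kinds: \textbf{(a)} the copies $u_i$ of the vertices $u\in V(C_i)\setminus\{v\}$ (these, together with $v_i$, form the clique in $G^{\cal C}$ coming from $C_i$); and \textbf{(b)} the ``other'' copy of $v$, i.e.\ the copy of $v$ lying in the second clique $C_j$ containing $v$ --- and there is at most one such $C_j$ precisely because $\cal C$ is sparse. The type-(a) neighbors receive pairwise distinct colors, each different from $c(v)$, since $V(C_i)$ induces a clique in $G$ and $c$, being proper, is injective on it. The unique possible type-(b) neighbor receives the color $c(v)$, which we have just seen is not used on any type-(a) neighbor. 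Hence all neighbors of $v_i$ get pairwise distinct colors under $c'$. As $v_i$ was arbitrary, no vertex of $G^{\cal C}$ is adjacent to two vertices of the same color, so $c'$ is an injective coloring and $\chi_i(G^{\cal C})\le\chi(G)$.

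I do not expect a genuine obstacle here; the only subtlety is keeping straight the two edge types of $G^{\cal C}$. The key point is that sparseness guarantees at most one type-(b) neighbor of $v_i$, so it introduces only the single extra color $c(v)$ among the neighbors of $v_i$, and this color is automatically distinct from the ``rainbow'' colors of the type-(a) neighbors. It is worth noting that $c'$ is typically \emph{not} a proper coloring of $G^{\cal C}$ (the two copies of $v$ are adjacent and share the color $c(v)$), but this is harmless, since injective colorings are not required to be proper.
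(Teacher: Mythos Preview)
Your argument is correct and essentially identical to the paper's own proof: both lift an optimal proper coloring $c$ of $G$ to $G^{\cal C}$ by assigning each copy of $v$ the color $c(v)$, then verify injectivity by splitting the neighborhood of a vertex into its clique-mates (pairwise distinctly colored by properness of $c$) and the at most one ``other copy'' (colored $c(v)$, hence distinct from all clique-mates). Your explicit remark that $c'$ need not be proper is a nice clarification the paper leaves implicit.
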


\begin{proof} Let $c:V(G)\to [k]$ be a proper coloring of $G$, where $k=\chi(G)$. Let the coloring $c^{*}: V(G^{\cal C})\to [k]$ be defined by $c^{*}(x)=c(x)$ if $x\in V(G)$ belongs to only one clique of $\cal C$, and $c^{*}(x_i)=c(x)=c^{*}(x_j)$, when $x$ belongs to $C_i$ and $C_j$. See Fig.~\ref{fig:example-colorings} for an example of such a derived coloring.

\begin{figure}[ht!]
\begin{center}
\begin{tikzpicture}[scale=1.0,style=thick,x=1cm,y=1cm]
\def\vr{3pt}
\begin{scope}[xshift=0cm, yshift=0cm] 
\coordinate(x1) at (-0.7,0.5);
\coordinate(x2) at (0,0);
\coordinate(x3) at (0,1);
\coordinate(x4) at (1,0);
\coordinate(x5) at (1,1);
\coordinate(x6) at (2,0);
\coordinate(x7) at (2,1);
\coordinate(x8) at (2.7,0.5);
\draw (x1) -- (x2) -- (x4) -- (x6) -- (x7) -- (x5) -- (x3) -- (x1);
\draw (x2) -- (x3) -- (x4) -- (x5) -- (x2);
\draw (x4) -- (x7) -- (x8);
\draw (x5) -- (x6);
\foreach \i in {1,...,8}
{
\draw(x\i)[fill=white] circle(\vr);
}
\node at (1,-1) {$G$};
\draw[below] (x1)++(0,-0.1) node {$1$};
\draw[below] (x2)++(0,-0.1) node {$2$};
\draw[above] (x3)++(0,0.1) node {$3$};
\draw[below] (x4)++(0,-0.1) node {$1$};
\draw[above] (x5)++(0,0.1) node {$4$};
\draw[below] (x6)++(0,-0.1) node {$3$};
\draw[above] (x7)++(0,0.1) node {$2$};
\draw[below] (x8)++(0,-0.1) node {$1$};
\end{scope}
\begin{scope}[xshift=5cm, yshift=0cm] 
\coordinate(x1) at (-0.7,0.5);
\coordinate(x2) at (0,0);
\coordinate(x3) at (0,1);
\coordinate(x4) at (1,0);
\coordinate(x5) at (1,1);
\coordinate(x6) at (2,0);
\coordinate(x7) at (2,1);
\coordinate(x8) at (3,0);
\coordinate(x9) at (3,1);
\coordinate(x10) at (4,0);
\coordinate(x11) at (4,1);
\coordinate(x12) at (5,1);
\coordinate(x13) at (5.7,0.5);
\draw (x1) -- (x2) -- (x4) -- (x6) -- (x8) -- (x10) -- (x11) -- (x9)-- (x7) -- (x5) -- (x3) -- (x1);
\draw (x4) -- (x5) -- (x6) -- (x7) -- (x4);
\draw (x8) -- (x11) -- (x10) -- (x9) -- (x8);
\draw (x2) -- (x3);
\draw (x11) -- (x12) -- (x13);
\foreach \i in {1,...,13}
{
\draw(x\i)[fill=white] circle(\vr);
}
\node at (2.5,-1) {$G^{\cal C}$};
\draw[below] (x1)++(0,-0.1) node {$1$};
\draw[below] (x2)++(0,-0.1) node {$2$};
\draw[above] (x3)++(0,0.1) node {$3$};
\draw[below] (x4)++(0,-0.1) node {$2$};
\draw[above] (x5)++(0,0.1) node {$3$};
\draw[below] (x6)++(0,-0.1) node {$1$};
\draw[above] (x7)++(0,0.1) node {$4$};
\draw[below] (x8)++(0,-0.1) node {$1$};
\draw[above] (x9)++(0,0.1) node {$4$};
\draw[below] (x10)++(0,-0.1) node {$3$};
\draw[above] (x11)++(0,0.1) node {$2$};
\draw[above] (x12)++(0,0.1) node {$2$};
\draw[below] (x13)++(0,-0.1) node {$1$};

\end{scope}
\end{tikzpicture}
\caption{An optimal coloring of $G$ and an optimal injective coloring of $G^{\cal C}$}
\label{fig:example-colorings}
\end{center}
\end{figure}
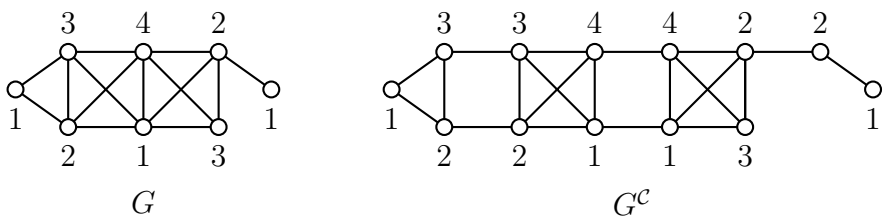

We claim that $c^{*}$ is injective, so we need to show that no two vertices in the neighborhood of any vertex $x$ in $G^{\cal C}$ receive the same color. Assume that $x$ is a vertex of $G$ that lies in only one clique, say $C_i$, of $\cal C$. Then, its neighbors are all in $C_i$, and since $c$ is a proper coloring, they have pairwise different colors (according to $c^{*}$ and $c$). Secondly, assume that $x=x_i$, that is, $x$ belongs to two cliques of $G$ from $\cal C$, one of which being $C_i$, and let the other be $C_j$. In this case, $N_{G^{\cal C}}(x_i)=V(C_i)\cup\{x_j\}$.  Then, the neighbors of $x_i$ in $C_i$ have pairwise different colors (by $c$ and $c^{*}$), which are all different from $c(x)=c^{*}(x_i)=c^{*}(x_j)$. In both cases, all neighbors of $x$ get pairwise different colors by $c^{*}$.
\end{proof}

Our second result detects a new family of Class 1 graphs based on their injective chromatic number. (Recall that by Vizing's theorem, $\Delta(G)\le \chi'(G)\le \Delta(G)+1$ holds for any graph $G$, where the graphs achieving the lower bound are said to belong to Class $1$.)

\begin{lemma}
\label{lem:edgecoloring}
If $\chi_i(G)=\Delta(G)$, then $G$ belongs to Class $1$.
\end{lemma}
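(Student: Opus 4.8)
The plan is to build a proper edge colouring of $G$ with $\Delta(G)$ colours directly from an optimal injective colouring, using modular arithmetic. Write $\Delta=\Delta(G)$ and regard the colour set as the cyclic group $\mathbb{Z}_\Delta=\{0,1,\dots,\Delta-1\}$. Since $\chi_i(G)=\Delta$, there is an injective colouring $f\colon V(G)\to\mathbb{Z}_\Delta$ using (at most, hence exactly) $\Delta$ colours. Define $g\colon E(G)\to\mathbb{Z}_\Delta$ by $g(uv)=f(u)+f(v)$, with the addition taken in $\mathbb{Z}_\Delta$.

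The first step is to verify that $g$ is a proper edge colouring. Take two distinct edges sharing a common vertex $u$, say $uv$ and $uw$ with $v\neq w$. Then $v$ and $w$ are two distinct neighbours of $u$, so the defining property of an injective colouring (no vertex is adjacent to two vertices of the same colour) forces $f(v)\neq f(w)$; as $f(v)$ and $f(w)$ are then distinct elements of the group $\mathbb{Z}_\Delta$, we get $g(uv)=f(u)+f(v)\neq f(u)+f(w)=g(uw)$. Hence incident edges always receive different colours, and since $g$ uses at most $|\mathbb{Z}_\Delta|=\Delta$ colours, $\chi'(G)\le\Delta$.

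The second step is immediate: any vertex of degree $\Delta(G)$ meets $\Delta(G)$ edges that must be coloured differently, so $\chi'(G)\ge\Delta(G)$ always. Combining this with the bound from the previous paragraph gives $\chi'(G)=\Delta(G)$, that is, $G$ is Class~$1$.

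There is essentially no obstacle here; the only point worth isolating is that the single condition defining an injective colouring—the neighbourhood of every vertex is rainbow-coloured—is exactly what is needed to make the ``sum'' colouring $g$ proper, and that working in the cyclic group $\mathbb{Z}_\Delta$ (rather than keeping colours in $\{1,\dots,\Delta\}$, where sums could exceed $\Delta$) keeps the number of edge colours equal to $\Delta$. The degenerate cases $\Delta(G)\le 1$ are trivial and can either be checked separately or ignored, since then $G$ has no two incident edges. Note also the payoff: combined with the fact (to be proved in Section~3) that $\chi_i(S_p^n)=p=\Delta(S_p^n)$, this lemma yields at once that $S_p^n$ is Class~$1$.
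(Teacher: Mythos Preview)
Your proof is correct and follows essentially the same approach as the paper: both take an injective colouring with values in $\{0,\dots,\Delta-1\}$ and define the edge colouring by the mod-$\Delta$ sum of the endpoint colours, then observe that the injective condition makes this proper. The only cosmetic difference is that the paper cites Vizing's theorem for the lower bound while you invoke the trivial bound $\chi'(G)\ge\Delta(G)$ directly.
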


\begin{proof}
Let $\Delta=\Delta(G)$, and let $c:V(G)\to \{0,1,\ldots,\Delta-1\}$ be an injective coloring of $G$. Define an edge coloring $c'=E(G)\to \{0,1,\ldots,\Delta-1\}$ arising from $c$ as follows. For each edge $uv\in E(G)$, let $c'(uv)=c(u)+c(v)\pmod\Delta $.  To see that $c'$ is a proper edge coloring of $G$, let $uv$ and $uw$ be two incident edges in $G$. Since $c(v)\ne c(w)$, we infer that $c'(uv)=c(u)+c(v) \ne c(u)+c(w) =c'(uw)$, where summations are taken with respect to modulo $\Delta$. Thus, $\chi'(G)\le \Delta$, which by Vizing's theorem implies that $G$ is in Class 1.
\end{proof}

\section{Sierpi\'nski graphs}
\label{sec:Sierpinski}

This section is devoted to obtain the injective chromatic number of Sierpi\'nski graphs, and to give some consequences of these computations. In particular, we give a short proof of the fact that all Sierpi\'nski graphs belong to Class~1, a result which was first proved in~\cite{HP-2012} with a lengthy argument.

When the Switching Tower of Hanoi game was introduced in~\cite{klavzar-1997}, it was natural to introduce Sierpi\'nski graphs. This family of graphs has subsequently attracted a great deal of interest for various reasons, see a very comprehensive 2017 survey paper~\cite{hinz-2017}, where, in addition to an overview of the results on the Sierpi\'nski graphs,  a classification of Sierpi\'nski-type graphs is proposed. For some later related papers we refer to~\cite{balakrishnan-2022, deng-2021, menon-2023}.

For $p\ge 1$ and $n\ge 1$, the {\em Sierpi\'nski graph} $S_p^n$ has the vertex set $V(S_p^n) = [p]^n$, and two vertices $(u_1,\ldots, u_n)$ and $(v_1,\ldots, v_n)$ are adjacent if there exists an index $d\in [n]$ such that (i) $u_i = v_i$ for $i\in [d-1]$, (ii) $u_d\ne v_d$, and (iii) $v_i = u_d$ and $u_i = v_d$ for $i\in \{d+1, \ldots, n\}$. The family of {\em Sierpi\'nski triangle graphs} $\ST_p^n$ was first introduced by Jakovac in~\cite{jakovac-2014}. These graphs can be defined in various ways, but for our purposes we do it for $p\ge 3$ as follows. If $p\ge 3$ and $n\ge 1$, then $\ST_p^n$ is the graph obtained from $S_p^n$ as follows. For any edge $uv$ which does not lie in a complete graph of order $p$, remove the edge $uv$ and identify the vertices $u$ and $v$. See Fig.~\ref{fig:S_3_3} where the graphs $S_3^3$ and $\ST_3^3$ are drawn.

\begin{figure}[ht!]
\begin{center}
\begin{tikzpicture}[scale=1.0,style=thick,x=1cm,y=1cm]
\def\vr{3pt}
\begin{scope}[xshift=0cm, yshift=0cm] 
\coordinate(x1) at (0.0,0.0);
\coordinate(x2) at (0.5,0.87);
\coordinate(x3) at (1,0);
\draw (x1) -- (x2) -- (x3) -- (x1);
\foreach \i in {1,...,3}
{
\draw(x\i)[fill=white] circle(\vr);
}
\node at (2,-1) {$\ST_3^3$};
\draw (0.5,0.3) ellipse (0.75cm and 0.75cm);
\end{scope}

\begin{scope}[xshift=1.0cm, yshift=0cm] 
\coordinate(x1) at (0.0,0.0);
\coordinate(x2) at (0.5,0.87);
\coordinate(x3) at (1,0);
\draw (x1) -- (x2) -- (x3) -- (x1);
\foreach \i in {1,...,3}
{
\draw(x\i)[fill=white] circle(\vr);
}
\draw (0.5,0.3) ellipse (0.75cm and 0.75cm);
\end{scope}

\begin{scope}[xshift=2.0cm, yshift=0cm] 
\coordinate(x1) at (0.0,0.0);
\coordinate(x2) at (0.5,0.87);
\coordinate(x3) at (1,0);
\draw (x1) -- (x2) -- (x3) -- (x1);
\foreach \i in {1,...,3}
{
\draw(x\i)[fill=white] circle(\vr);
}
\draw (0.5,0.3) ellipse (0.75cm and 0.75cm);
\end{scope}

\begin{scope}[xshift=3.0cm, yshift=0cm] 
\coordinate(x1) at (0.0,0.0);
\coordinate(x2) at (0.5,0.87);
\coordinate(x3) at (1,0);
\draw (x1) -- (x2) -- (x3) -- (x1);
\foreach \i in {1,...,3}
{
\draw(x\i)[fill=white] circle(\vr);
}
\draw (0.5,0.3) ellipse (0.75cm and 0.75cm);
\end{scope}

\begin{scope}[xshift=0.5cm, yshift=0.87cm] 
\coordinate(x1) at (0.0,0.0);
\coordinate(x2) at (0.5,0.87);
\coordinate(x3) at (1,0);
\draw (x1) -- (x2) -- (x3) -- (x1);
\foreach \i in {1,...,3}
{
\draw(x\i)[fill=white] circle(\vr);
}
\draw (0.5,0.3) ellipse (0.75cm and 0.75cm);
\end{scope}

\begin{scope}[xshift=2.5cm, yshift=0.87cm] 
\coordinate(x1) at (0.0,0.0);
\coordinate(x2) at (0.5,0.87);
\coordinate(x3) at (1,0);
\draw (x1) -- (x2) -- (x3) -- (x1);
\foreach \i in {1,...,3}
{
\draw(x\i)[fill=white] circle(\vr);
}
\draw (0.5,0.3) ellipse (0.75cm and 0.75cm);
\end{scope}

\begin{scope}[xshift=1.0cm, yshift=1.74cm] 
\coordinate(x1) at (0.0,0.0);
\coordinate(x2) at (0.5,0.87);
\coordinate(x3) at (1,0);
\draw (x1) -- (x2) -- (x3) -- (x1);
\foreach \i in {1,...,3}
{
\draw(x\i)[fill=white] circle(\vr);
}
\draw (0.5,0.3) ellipse (0.75cm and 0.75cm);
\end{scope}

\begin{scope}[xshift=2.0cm, yshift=1.74cm] 
\coordinate(x1) at (0.0,0.0);
\coordinate(x2) at (0.5,0.87);
\coordinate(x3) at (1,0);
\draw (x1) -- (x2) -- (x3) -- (x1);
\foreach \i in {1,...,3}
{
\draw(x\i)[fill=white] circle(\vr);
}
\draw (0.5,0.3) ellipse (0.75cm and 0.75cm);
\end{scope}

\begin{scope}[xshift=1.5cm, yshift=2.61cm] 
\coordinate(x1) at (0.0,0.0);
\coordinate(x2) at (0.5,0.87);
\coordinate(x3) at (1,0);
\draw (x1) -- (x2) -- (x3) -- (x1);
\foreach \i in {1,...,3}
{
\draw(x\i)[fill=white] circle(\vr);
}
\draw (0.5,0.3) ellipse (0.75cm and 0.75cm);
\end{scope}
\end{tikzpicture}
\hspace*{1cm}
\begin{tikzpicture}[scale=1.0,style=thick,x=1cm,y=1cm]
\def\vr{3pt}
\begin{scope}[xshift=0cm, yshift=0cm] 
\coordinate(x1) at (0.0,0.0);
\coordinate(x2) at (0.5,0.87);
\coordinate(x3) at (1,0);
\draw (x1) -- (x2) -- (x3) -- (x1);
\draw (1.0,0.0) -- (2.0,0.0);
\draw (3.0,0.0) -- (4.0,0.0);
\draw (5.0,0.0) -- (6.0,0.0);
\draw (3.0,3.48) -- (4.0,3.48);
\draw (0.5,0.87) -- (1.0,1.74);
\draw (1.5,2.61) -- (2.0,3.48);
\draw (2.5,4.35) -- (3.0,5.22);
\draw (6.5,0.87) -- (6.0,1.74);
\draw (5.5,2.61) -- (5.0,3.48);
\draw (4.5,4.35) -- (4.0,5.22);
\draw (2.5,0.87) -- (2.0,1.74);
\draw (4.5,0.87) -- (5.0,1.74);

\foreach \i in {1,...,3}
{
\draw(x\i)[fill=white] circle(\vr);
}
\node at (3.5,-1) {$S_3^3$};
\draw (0.5,0.3) ellipse (0.75cm and 0.75cm);
\end{scope}

\begin{scope}[xshift=2.0cm, yshift=0cm] 
\coordinate(x1) at (0.0,0.0);
\coordinate(x2) at (0.5,0.87);
\coordinate(x3) at (1,0);
\draw (x1) -- (x2) -- (x3) -- (x1);
\foreach \i in {1,...,3}
{
\draw(x\i)[fill=white] circle(\vr);
}
\draw (0.5,0.3) ellipse (0.75cm and 0.75cm);
\end{scope}

\begin{scope}[xshift=4.0cm, yshift=0cm] 
\coordinate(x1) at (0.0,0.0);
\coordinate(x2) at (0.5,0.87);
\coordinate(x3) at (1,0);
\draw (x1) -- (x2) -- (x3) -- (x1);
\foreach \i in {1,...,3}
{
\draw(x\i)[fill=white] circle(\vr);
}
\draw (0.5,0.3) ellipse (0.75cm and 0.75cm);
\end{scope}

\begin{scope}[xshift=6.0cm, yshift=0cm] 
\coordinate(x1) at (0.0,0.0);
\coordinate(x2) at (0.5,0.87);
\coordinate(x3) at (1,0);
\draw (x1) -- (x2) -- (x3) -- (x1);
\foreach \i in {1,...,3}
{
\draw(x\i)[fill=white] circle(\vr);
}
\draw (0.5,0.3) ellipse (0.75cm and 0.75cm);
\end{scope}

\begin{scope}[xshift=1.0cm, yshift=1.74cm] 
\coordinate(x1) at (0.0,0.0);
\coordinate(x2) at (0.5,0.87);
\coordinate(x3) at (1,0);
\draw (x1) -- (x2) -- (x3) -- (x1);
\foreach \i in {1,...,3}
{
\draw(x\i)[fill=white] circle(\vr);
}
\draw (0.5,0.3) ellipse (0.75cm and 0.75cm);
\end{scope}

\begin{scope}[xshift=5cm, yshift=1.74cm] 
\coordinate(x1) at (0.0,0.0);
\coordinate(x2) at (0.5,0.87);
\coordinate(x3) at (1,0);
\draw (x1) -- (x2) -- (x3) -- (x1);
\foreach \i in {1,...,3}
{
\draw(x\i)[fill=white] circle(\vr);
}
\draw (0.5,0.3) ellipse (0.75cm and 0.75cm);
\end{scope}

\begin{scope}[xshift=2.0cm, yshift=3.48cm] 
\coordinate(x1) at (0.0,0.0);
\coordinate(x2) at (0.5,0.87);
\coordinate(x3) at (1,0);
\draw (x1) -- (x2) -- (x3) -- (x1);
\foreach \i in {1,...,3}
{
\draw(x\i)[fill=white] circle(\vr);
}
\draw (0.5,0.3) ellipse (0.75cm and 0.75cm);
\end{scope}

\begin{scope}[xshift=4.0cm, yshift=3.48cm] 
\coordinate(x1) at (0.0,0.0);
\coordinate(x2) at (0.5,0.87);
\coordinate(x3) at (1,0);
\draw (x1) -- (x2) -- (x3) -- (x1);
\foreach \i in {1,...,3}
{
\draw(x\i)[fill=white] circle(\vr);
}
\draw (0.5,0.3) ellipse (0.75cm and 0.75cm);
\end{scope}

\begin{scope}[xshift=3cm, yshift=5.22cm] 
\coordinate(x1) at (0.0,0.0);
\coordinate(x2) at (0.5,0.87);
\coordinate(x3) at (1,0);
\draw (x1) -- (x2) -- (x3) -- (x1);
\foreach \i in {1,...,3}
{
\draw(x\i)[fill=white] circle(\vr);
}
\draw (0.5,0.3) ellipse (0.75cm and 0.75cm);
\end{scope}

\end{tikzpicture}
\caption{The graph $\ST_3^3$ and its sparse edge clique cover (left), and the graph $S_3^3$ (right)}
\label{fig:S_3_3}
\end{center}
\end{figure}
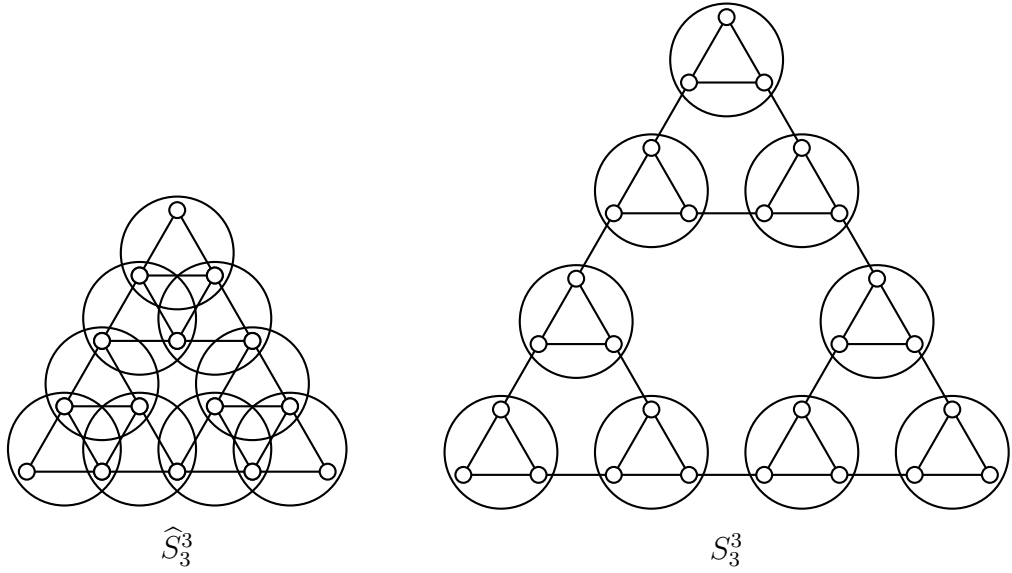

\begin{theorem}
\label{thm:sierpinski}
If $p\ge 3$ and $n\ge 1$, then $\chi_i(S_p^n)=p$. Moreover, $S_p^n$ is perfect injectively colorable.
\end{theorem}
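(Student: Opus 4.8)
The plan is to obtain both assertions from Lemma~\ref{lem:vertexcoloring}, applied to $\ST_p^n$. I begin with the two easy ingredients. Every vertex of $S_p^n$ has at most one neighbor outside its bottom clique $\{(u_1,\dots,u_{n-1},j):j\in[p]\}$, so each bottom clique induces a $K_p$; since restricting an injective coloring to an induced subgraph again gives an injective coloring, and since $\chi_i(K_p)=p$ for $p\ge 3$ (any two vertices of $K_p$ have a common neighbor, forcing all $p$ colors to be distinct), we get $\chi_i(S_p^n)\ge p$. For the upper bound, let ${\cal C}$ consist of the $p^{n-1}$ cliques of $\ST_p^n$ that are the images of the bottom cliques of $S_p^n$ (each a $K_p$). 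Every vertex of $\ST_p^n$ lies in exactly one clique of ${\cal C}$ (the $p$ extreme vertices $(i,\dots,i)$) or in exactly two (each vertex arising from identifying the two endpoints of some edge of $S_p^n$), so ${\cal C}$ is a sparse edge clique cover. Unraveling the construction of the derived graph, $(\ST_p^n)^{\cal C}$ re-splits each vertex lying in two cliques of ${\cal C}$ into two adjacent vertices, which exactly undoes the passage $S_p^n\to\ST_p^n$; hence $S_p^n\cong(\ST_p^n)^{\cal C}$, and Lemma~\ref{lem:vertexcoloring} gives $\chi_i(S_p^n)\le\chi(\ST_p^n)$.

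The crux is thus to prove $\chi(\ST_p^n)\le p$ (equality then follows since $K_p\subseteq\ST_p^n$), and this is the step I expect to be the main obstacle. The case $n=1$ is trivial as $\ST_p^1=K_p$, so assume $n\ge 2$ and use the standard recursive structure of $\ST_p^n$: it consists of $p$ copies $T_1,\dots,T_p$ of $\ST_p^{n-1}$ in which the extreme vertex ``$j$'' of $T_i$ is identified with the extreme vertex ``$i$'' of $T_j$ for $i\ne j$, while the extreme vertex ``$i$'' of $T_i$ becomes the $i$-th extreme vertex of $\ST_p^n$. Since colors can be permuted independently inside each copy, consistently gluing proper $p$-colorings of the copies amounts to choosing permutations $\pi_i$ of $[p]$ with $\pi_i(j)=\pi_j(i)$ for $i\ne j$, after which the extreme vertices of $\ST_p^n$ receive the colors $\pi_i(i)$. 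The parity of $p$ decides which invariant to carry through the induction. For $p$ odd I keep ``the $p$ extreme vertices are rainbow'' (already valid for $\ST_p^1=K_p$), realized by $\pi_i(k)=i+k$ in $\mathbb{Z}_p$: this symmetric Latin square has diagonal $i\mapsto 2i$, which is again a bijection. For $p$ even a symmetric Latin square has no transversal diagonal, so I instead keep ``the $p$ extreme vertices are monochromatic'', starting from $\ST_p^2$: there a proper $(p-1)$-edge-coloring of $K_p$ (which exists since $\chi'(K_p)=p-1$) colors the identified vertices, and the unused $p$-th color is given to all the extreme vertices. This yields $\chi(\ST_p^n)\le p$, hence $\chi_i(S_p^n)=p$.

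For perfect injective colorability, first observe that two distinct vertices of a common bottom clique of $S_p^n$ have a common neighbor (a third vertex of that clique, using $p\ge 3$), so any open packing of $S_p^n$ meets each of the $p^{n-1}$ bottom cliques at most once, whence $\opack(S_p^n)\le p^{n-1}$. Now fix a proper $p$-coloring $c$ of $\ST_p^n$ and let $c^{*}$ be the injective $p$-coloring of $S_p^n\cong(\ST_p^n)^{\cal C}$ produced in the proof of Lemma~\ref{lem:vertexcoloring}. Since $c$ is proper and each clique of ${\cal C}$ is a $K_p$, $c$ restricts to a bijection onto $[p]$ on each clique of ${\cal C}$, so every color class of $c^{*}$ contains exactly one vertex of each bottom clique of $S_p^n$ and hence has cardinality exactly $p^{n-1}$. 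As the color classes of any injective coloring are open packings, each color class of $c^{*}$ is an open packing of cardinality $p^{n-1}=\opack(S_p^n)$; thus $c^{*}$ witnesses that $S_p^n$ is perfect injectively colorable (and incidentally $\opack(S_p^n)=p^{n-1}$).
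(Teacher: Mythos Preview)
Your argument is correct and follows the paper's route exactly: the lower bound, the sparse edge clique cover ${\cal C}$ of $\ST_p^n$, the identification $(\ST_p^n)^{\cal C}\cong S_p^n$, the application of Lemma~\ref{lem:vertexcoloring}, and the counting for the second assertion are all as in the paper (your perfect-injective-colorability argument is the paper's, just phrased via the explicit coloring $c^{*}$ rather than an arbitrary $\chi_i$-coloring). The one substantive difference is that where the paper simply cites Jakovac for $\chi(\ST_p^n)=p$, you supply a self-contained inductive proof, splitting on the parity of $p$ and carrying the invariant ``rainbow extreme vertices'' (via the symmetric Latin square $\pi_i(k)=i+k$) for odd $p$ and ``monochromatic extreme vertices'' (seeded by a $(p-1)$-edge-coloring of $K_p$ at level $n=2$) for even $p$; this is a nice addition that makes the theorem independent of~\cite{jakovac-2014}, and your observation that no symmetric Latin square of even order has a transversal diagonal cleanly explains why the two parities need different invariants.
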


\begin{proof}
As $\Delta(S_p^n)=p$, we have $\chi_i(S_p^n)\ge p$.

Let ${\cal C}$ be the edge clique cover of $\ST_p^n$ consisting of all the cliques of order $p$ of $\ST_p^n$ which are obtained from the cliques of order $p$ of $S_p^n$ after contacting the edges of $S_p^n$ which lie in no such clique. See Fig.~\ref{fig:S_3_3} where the described clique-edge cover of $\ST_3^3$ is shown. By the way this cover is constructed, we infer that ${\cal C}$ is a sparse clique-edge cover. Hence we can consider $(\ST_p^n)^{\cal C}$ and again by the construction we see that $(\ST_p^n)^{\cal C} \cong S_p^n$, see Fig.~\ref{fig:S_3_3} again. Applying Lemma~\ref{lem:vertexcoloring} we then get
$$\chi_i(S_p^n) = \chi_i((\ST_p^n)^{\cal C}) \le \chi(\ST_p^n) = p\,,$$
where the last equality is a result due to Jakovac proved in~\cite{jakovac-2014}. This proves the first assertion of the theorem.

To prove the second assertion, note that the vertex set of $S_p^n$ partitions into $p^{n-1}$ cliques of cardinality $p$. Combining this with $\chi_i(S_p^n)=p$, we infer that each color class of any $\chi_i(S_p^n)$-coloring has a non-empty intersection with each such clique of $S_p^n$. Since each color class is an open packing, we infer $\opack(S_p^n)\ge p^{n-1}$. On the other hand, note that if $P$ is an open packing of a graph, then no two vertices of $P$ can lie in a triangle in $G$. Therefore, when $p\ge 3$, we have $\opack(S_p^n)\le p^{n-1}$. Hence, $\opack(S_p^n)= p^{n-1}$, and moreover, every color class of any $\chi_i(S_p^n)$-coloring has $\opack(S_p^n)$ vertices. This proves the second assertion.
\end{proof}

We next give a short proof of the following result which was proved first by Hinz and Parisse~\cite{HP-2012} by a lengthy argument.

\begin{corollary}
If $p\ge 2$ and $n\ge 2$, then $S_p^n$ is a Class $1$ graph.  \end{corollary}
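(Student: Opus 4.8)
The plan is to derive the corollary directly from Lemma~\ref{lem:edgecoloring} and Theorem~\ref{thm:sierpinski}, handling the case $p=2$ by a short separate argument. The point of the hypothesis $n\ge 2$ is that $S_p^1=K_p$, which is a Class~$2$ graph whenever $p$ is odd, so the restriction is genuinely needed; it is also exactly what makes $\Delta(S_p^n)=p$ hold.

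First I would dispose of the case $p\ge 3$. For $n\ge 2$ the only vertices of $S_p^n$ of degree less than $p$ are the $p$ ``extreme'' vertices $(i,\dots,i)$, which have degree $p-1$, so $\Delta(S_p^n)=p$. By Theorem~\ref{thm:sierpinski} we have $\chi_i(S_p^n)=p$, hence $\chi_i(S_p^n)=\Delta(S_p^n)$, and Lemma~\ref{lem:edgecoloring} immediately gives that $S_p^n$ is in Class~$1$.

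It then remains to treat $p=2$, where one uses the well-known fact that $S_2^n\cong P_{2^n}$, the path on $2^n$ vertices. Since $n\ge 2$, this path has at least four vertices, so $\Delta(S_2^n)=2$; moreover $\chi_i(P_m)=2$ for every $m\ge 3$, since on a path the injectivity condition only forbids equal colors at distance exactly $2$, and the pattern $1,1,2,2,1,1,2,2,\dots$ along consecutive vertices is therefore injective (the lower bound $\chi_i\ge\Delta=2$ being immediate). Thus again $\chi_i(S_2^n)=\Delta(S_2^n)$, and Lemma~\ref{lem:edgecoloring} yields the claim; alternatively one may simply invoke that every path is a Class~$1$ graph. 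There is essentially no obstacle in this proof: the only points requiring a small amount of care are recording that $\Delta(S_p^n)=p$ uses $n\ge 2$, and carrying out the elementary $p=2$ case, which the main theorem excludes.
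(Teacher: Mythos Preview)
Your proof is correct and follows essentially the same route as the paper: handle $p=2$ via $S_2^n\cong P_{2^n}$, and for $p\ge 3$ combine Theorem~\ref{thm:sierpinski} with Lemma~\ref{lem:edgecoloring} using $\Delta(S_p^n)=p$. The extra details you include (why $n\ge 2$ is needed, the explicit injective $2$-coloring of paths) are fine but not required, since the paper simply declares the $p=2$ case ``clear''.
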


\begin{proof}
The case $p=2$ is clear since $S_2^n \cong P_{2^n}$. So, assume in the rest that $p\ge 3$. By Theorem~\ref{thm:sierpinski} we have $\chi_i(S_p^n)=p=\Delta(S_p^n)$, which by Lemma~\ref{lem:edgecoloring} immediately gives the conclusion.
\end{proof}

Gravier, Kov\v se, and Parreau~\cite{gravier-2011} defined generalized Sierpi\'nski graphs $S_p^n$ as follows. Let $G$ be an arbitrary graph. Then the {\em generalized Sierpi\'nski graph} $S_G^n$ is the graph with the vertex set $V(G)^n$, where two vertices $(u_1,\ldots, u_n)$ and $(v_1,\ldots, v_n)$ are adjacent if there exists an $i\in [n]$ such that $u_j = v_j$ for $j<i$, $u_iv_i\in E(G)$, and $u_j = v_i$ and $v_j = u_i$ for $j>i$. See~\cite{korze-2019} where the packing coloring of generalized Sierpi\'nski graphs was investigated.

Clearly, $\chi_i(S_G^n)\le |V(G)|$ follows from Theorem~\ref{thm:sierpinski} and the fact that $S_G^n$ is a spanning subgraph of $S_p^n$, where $p=|V(G)|$. The next result shows that this bound need not be sharp. In other words, Theorem~\ref{thm:sierpinski} does not have a counterpart in generalized Sierpi\'nski graphs.

\begin{proposition}
\label{prop:S_C_4}
If $n\ge 2$, then $\chi_i(S_{C_4}^n) = 3$ and $S_{C_4}^n$ is not perfect injectively colorable.
\end{proposition}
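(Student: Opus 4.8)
The plan is to split the statement into three pieces: (a) the lower bound $\chi_i(S_{C_4}^n)\ge 3$; (b) the upper bound $\chi_i(S_{C_4}^n)\le 3$ by exhibiting an explicit injective $3$-coloring; and (c) the failure of perfect injective colorability, which amounts to showing that no $3$-coloring can have all color classes of size $\opack(S_{C_4}^n)$. For (a), since $C_4$ is bipartite with $4$ vertices, $S_{C_4}^n$ contains induced $4$-cycles (already $S_{C_4}^1=C_4$ is one), and a vertex $v$ on a $4$-cycle has two neighbors with a common neighbor other than $v$; more simply, in $C_4=u_1u_2u_3u_4$ the two neighbors of $u_1$, namely $u_2$ and $u_4$, have the common neighbor $u_3$, so they must receive different colors, and together with $u_1$ this forces $3$ colors already on $S_{C_4}^1$ (hence on $S_{C_4}^n$, which contains $S_{C_4}^1$ as an induced subgraph corresponding to a fixed prefix). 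Thus $\chi_i(S_{C_4}^n)\ge 3$.

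For (b), I would build the coloring recursively following the self-similar structure. Label $V(C_4)=\{1,2,3,4\}$ with edges $12,23,34,41$. First fix an injective $3$-coloring $f_1$ of $S_{C_4}^1=C_4$: since opposite vertices in $C_4$ have the same two neighbors, an injective coloring of $C_4$ just needs the two pairs of opposite vertices to be colored so that within each pair the vertices get the two "used up" neighbor-colors correctly — concretely $f_1(1)=1$, $f_1(2)=2$, $f_1(3)=1$, $f_1(4)=3$ works (check: neighbors of $1$ are $2,4$ colored $2,3$; neighbors of $3$ are $2,4$ colored $2,3$; neighbors of $2$ are $1,3$ colored $1,1$ — bad). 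This illustrates the real subtlety: opposite vertices of $C_4$ \emph{must} receive the same color in any injective coloring, so $S_{C_4}^1$ has essentially a unique $3$-coloring pattern up to permutation, namely color the two vertices of one bipartition class alike and the other two distinctly, which is impossible with $C_4$ since both classes have size $2$ and each class sees the \emph{same} pair of vertices. I therefore expect the correct statement is that $S_{C_4}^1$ needs exactly $3$ colors with opposite vertices forced equal only in one class; I would work out the (small, finite) case analysis for $n=1,2$ by hand and then propagate: given an injective $3$-coloring of $S_{C_4}^{n-1}$, place a copy of it into each of the four "branches" $S_{C_4}^n[\{i\}\times V(C_4)^{n-1}]$, possibly after permuting colors per branch, and verify that the connecting edges (which join branch $i$ to branch $j$ whenever $ij\in E(C_4)$, at the single "contact" vertex pair) do not create a monochromatic pair in any neighborhood. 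The key local check is that a contact vertex has degree $3$ (two neighbors inside its branch's top clique-like structure, one neighbor in the adjacent branch), and one must ensure its three neighbors carry distinct colors; since there are only finitely many branch-permutation choices, a short exhaustive argument settles it.

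For (c), the perfect-injective-colorability part, the plan is to compute $\opack(S_{C_4}^n)$ and then show it is incompatible with a $3$-coloring by a counting argument. An open packing may contain at most one vertex from any set of vertices sharing a common neighbor; in particular, since every vertex of $S_{C_4}^n$ lies in a $4$-cycle together with its "opposite" partner (who has the same neighborhood), an open packing contains at most one of each such opposite pair, giving $\opack(S_{C_4}^n)\le |V(S_{C_4}^n)|/2 = 4^n/2 = 2\cdot 4^{n-1}$; I would then either confirm this bound is tight or find the true value by a more careful self-similar estimate. A perfect injective $3$-coloring would partition the $4^n$ vertices into $3$ open packings, forcing $4^n = 3\cdot\opack(S_{C_4}^n)$, i.e.\ $\opack(S_{C_4}^n)=4^n/3$, which is not an integer — so if the divisibility obstruction already kills it, we are done immediately; otherwise, if $\opack(S_{C_4}^n)$ turns out smaller than $4^n/3$, the inequality $3\cdot\opack(S_{C_4}^n)<4^n$ directly shows that three open packings cannot cover $V(S_{C_4}^n)$, hence no $\chi_i$-coloring has all classes of maximum open-packing size. \textbf{The main obstacle} I anticipate is pinning down $\opack(S_{C_4}^n)$ precisely (the crude "half the vertices" bound may not be attained because the forced equal-coloring of opposite pairs interacts badly with the branch-connecting edges), and correspondingly making the counting argument in (c) airtight; the coloring construction in (b) is routine once the $n=1,2$ base cases are understood, and (a) is immediate.
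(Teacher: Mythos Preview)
Your lower-bound argument in (a) has a genuine gap: it relies on $\chi_i(C_4)\ge 3$, but in fact $\chi_i(C_4)=2$. An injective coloring forbids two vertices with a \emph{common neighbor} from sharing a color; it does not forbid adjacent vertices from sharing one. In $C_4=u_1u_2u_3u_4$ the only constraints are $c(u_1)\ne c(u_3)$ and $c(u_2)\ne c(u_4)$, so the $2$-coloring $c(u_1)=c(u_2)=1$, $c(u_3)=c(u_4)=2$ is injective. (This is precisely why the proposition is stated only for $n\ge 2$.) The paper's lower bound is the one-liner $\chi_i(S_{C_4}^n)\ge\Delta(S_{C_4}^n)=3$ for $n\ge 2$: a contact vertex between two branches has three neighbors, which must all receive distinct colors. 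You actually mention this degree-$3$ fact later in (b); move it to (a) and discard the $C_4$ argument entirely.

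Your analysis of $C_4$ in (b) inherits the same confusion (you write that opposite vertices must get the \emph{same} color, when your own failed example shows they must be \emph{different}), so the base case $n=1$ is not the right starting point. The paper instead writes down an explicit injective $3$-coloring of $S_{C_4}^2$ and then replicates it four times with a local check at the connecting edges --- exactly your recursive plan, just started one level up.

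For (c), your divisibility observation is already the complete argument and matches the paper verbatim: since $3\nmid 4^n$, the $4^n$ vertices cannot be split into three classes of equal size, so no injective $3$-coloring has all classes of cardinality $\opack(S_{C_4}^n)$. You do not need to compute $\opack(S_{C_4}^n)$, and the ``main obstacle'' you anticipate does not arise.
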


\begin{proof}
Let $n\ge 2$. Then $\Delta(S_{C_4}^n) = 3$, which implies that $\chi_i(S_{C_4}^n) \ge 3$. To see that $\chi_i(S_{C_4}^n) \le 3$ consider the labeling of $S_{C_4}^2$ as presented in the left-hand side of Fig.~\ref{fig:S_C_4}.

\begin{figure}[ht!]
\begin{center}
\begin{tikzpicture}[scale=1.0,style=thick,x=1cm,y=1cm]
\def\vr{3pt}

\begin{scope}[xshift=-5cm, yshift=2cm] 
\coordinate(x1) at (0,0);
\coordinate(x2) at (1,0);
\coordinate(x3) at (2,0);
\coordinate(x4) at (3,0);
\coordinate(x5) at (0,1);
\coordinate(x6) at (1,1);
\coordinate(x7) at (2,1);
\coordinate(x8) at (3,1);
\coordinate(x9) at (0,2);
\coordinate(x10) at (1,2);
\coordinate(x11) at (2,2);
\coordinate(x12) at (3,2);
\coordinate(x13) at (0,3);
\coordinate(x14) at (1,3);
\coordinate(x15) at (2,3);
\coordinate(x16) at (3,3);
\draw (x1) -- (x2) -- (x6) -- (x5) -- (x1);
\draw (x2) -- (x3) -- (x4) -- (x8) -- (x7) -- (x3);
\draw (x8) -- (x12) -- (x16) -- (x15) -- (x11) -- (x12);
\draw (x15) -- (x14) -- (x13) -- (x9) -- (x10) -- (x14);
\draw (x5) -- (x9);
\foreach \i in {1,...,16}
{
\draw(x\i)[fill=white] circle(\vr);
}
\draw[below] (x1)++(0,-0.1) node {$3$};
\draw[below] (x2)++(0,-0.1) node {$1$};
\draw[below] (x3)++(0,-0.1) node {$2$};
\draw[below] (x4)++(0,-0.1) node {$3$};
\draw[above] (x13)++(0,0.1) node {$3$};
\draw[above] (x14)++(0,0.1) node {$1$};
\draw[above] (x15)++(0,0.1) node {$2$};
\draw[above] (x16)++(0,0.1) node {$3$};
\draw[left] (x5)++(-0.1,0) node {$2$};
\draw[left] (x9)++(-0.1,0) node {$2$};
\draw[right] (x8)++(0.1,0) node {$1$};
\draw[right] (x12)++(0.1,0) node {$1$};
\draw[right] (x10)++(0.1,0) node {$1$};
\draw[below] (x11)++(0,-0.1) node {$2$};
\draw[left] (x7)++(-0.1,0) node {$2$};
\draw[above] (x6)++(0,0.1) node {$1$};
\end{scope}

\begin{scope}[xshift=0cm, yshift=0cm] 
\coordinate(x1) at (0,0);
\coordinate(x2) at (1,0);
\coordinate(x3) at (2,0);
\coordinate(x4) at (3,0);
\coordinate(x5) at (0,1);
\coordinate(x6) at (1,1);
\coordinate(x7) at (2,1);
\coordinate(x8) at (3,1);
\coordinate(x9) at (0,2);
\coordinate(x10) at (1,2);
\coordinate(x11) at (2,2);
\coordinate(x12) at (3,2);
\coordinate(x13) at (0,3);
\coordinate(x14) at (1,3);
\coordinate(x15) at (2,3);
\coordinate(x16) at (3,3);
\draw (x1) -- (x2) -- (x6) -- (x5) -- (x1);
\draw (x2) -- (x3) -- (x4) -- (x8) -- (x7) -- (x3);
\draw (x8) -- (x12) -- (x16) -- (x15) -- (x11) -- (x12);
\draw (x15) -- (x14) -- (x13) -- (x9) -- (x10) -- (x14);
\draw (x5) -- (x9);
\draw (3,0) -- (4,0);
\draw (0,3) -- (0,4);
\draw (7,3) -- (7,4);
\draw (3,7) -- (4,7);
\foreach \i in {1,...,16}
{
\draw(x\i)[fill=white] circle(\vr);
}
\draw[left] (x13)++(-0.1,0) node {$3$};
\draw[below] (x4)++(0,-0.1) node {$3$};
\draw[below] (x1)++(0,-0.1) node {$3$};
\draw[above] (x16)++(0,0.1) node {$3$};
\end{scope}

\begin{scope}[xshift=4cm, yshift=0cm] 
\coordinate(x1) at (0,0);
\coordinate(x2) at (1,0);
\coordinate(x3) at (2,0);
\coordinate(x4) at (3,0);
\coordinate(x5) at (0,1);
\coordinate(x6) at (1,1);
\coordinate(x7) at (2,1);
\coordinate(x8) at (3,1);
\coordinate(x9) at (0,2);
\coordinate(x10) at (1,2);
\coordinate(x11) at (2,2);
\coordinate(x12) at (3,2);
\coordinate(x13) at (0,3);
\coordinate(x14) at (1,3);
\coordinate(x15) at (2,3);
\coordinate(x16) at (3,3);
\draw (x1) -- (x2) -- (x6) -- (x5) -- (x1);
\draw (x2) -- (x3) -- (x4) -- (x8) -- (x7) -- (x3);
\draw (x8) -- (x12) -- (x16) -- (x15) -- (x11) -- (x12);
\draw (x15) -- (x14) -- (x13) -- (x9) -- (x10) -- (x14);
\draw (x5) -- (x9);
\foreach \i in {1,...,16}
{
\draw(x\i)[fill=white] circle(\vr);
}
\draw[below] (x1)++(0,-0.1) node {$3$};
\draw[right] (x16)++(0.1,0) node {$3$};
\draw[below] (x4)++(0,-0.1) node {$3$};
\draw[left] (x13)++(-0.1,0) node {$3$};
\end{scope}

\begin{scope}[xshift=0cm, yshift=4cm] 
\coordinate(x1) at (0,0);
\coordinate(x2) at (1,0);
\coordinate(x3) at (2,0);
\coordinate(x4) at (3,0);
\coordinate(x5) at (0,1);
\coordinate(x6) at (1,1);
\coordinate(x7) at (2,1);
\coordinate(x8) at (3,1);
\coordinate(x9) at (0,2);
\coordinate(x10) at (1,2);
\coordinate(x11) at (2,2);
\coordinate(x12) at (3,2);
\coordinate(x13) at (0,3);
\coordinate(x14) at (1,3);
\coordinate(x15) at (2,3);
\coordinate(x16) at (3,3);
\draw (x1) -- (x2) -- (x6) -- (x5) -- (x1);
\draw (x2) -- (x3) -- (x4) -- (x8) -- (x7) -- (x3);
\draw (x8) -- (x12) -- (x16) -- (x15) -- (x11) -- (x12);
\draw (x15) -- (x14) -- (x13) -- (x9) -- (x10) -- (x14);
\draw (x5) -- (x9);
\foreach \i in {1,...,16}
{
\draw(x\i)[fill=white] circle(\vr);
}
\draw[left] (x1)++(-0.1,0) node {$3$};
\draw[above] (x16)++(0,0.1) node {$3$};
\draw[right] (x4)++(0.1,0) node {$3$};
\draw[above] (x13)++(0,0.1) node {$3$};
\end{scope}

\begin{scope}[xshift=4cm, yshift=4cm] 
\coordinate(x1) at (0,0);
\coordinate(x2) at (1,0);
\coordinate(x3) at (2,0);
\coordinate(x4) at (3,0);
\coordinate(x5) at (0,1);
\coordinate(x6) at (1,1);
\coordinate(x7) at (2,1);
\coordinate(x8) at (3,1);
\coordinate(x9) at (0,2);
\coordinate(x10) at (1,2);
\coordinate(x11) at (2,2);
\coordinate(x12) at (3,2);
\coordinate(x13) at (0,3);
\coordinate(x14) at (1,3);
\coordinate(x15) at (2,3);
\coordinate(x16) at (3,3);
\draw (x1) -- (x2) -- (x6) -- (x5) -- (x1);
\draw (x2) -- (x3) -- (x4) -- (x8) -- (x7) -- (x3);
\draw (x8) -- (x12) -- (x16) -- (x15) -- (x11) -- (x12);
\draw (x15) -- (x14) -- (x13) -- (x9) -- (x10) -- (x14);
\draw (x5) -- (x9);
\foreach \i in {1,...,16}
{
\draw(x\i)[fill=white] circle(\vr);
}
\draw[right] (x4)++(0.1,0) node {$3$};
\draw[above] (x13)++(0,0.1) node {$3$};
\draw[below] (x1)++(0,-0.1) node {$3$};
\draw[above] (x16)++(0,0.1) node {$3$};
\end{scope}

\end{tikzpicture}
\caption{An injective coloring of $S_{C_4}^2$ (left) and its lift up to an injective coloring of $S_{C_4}^3$ (right)}
\label{fig:S_C_4}
\end{center}
\end{figure}
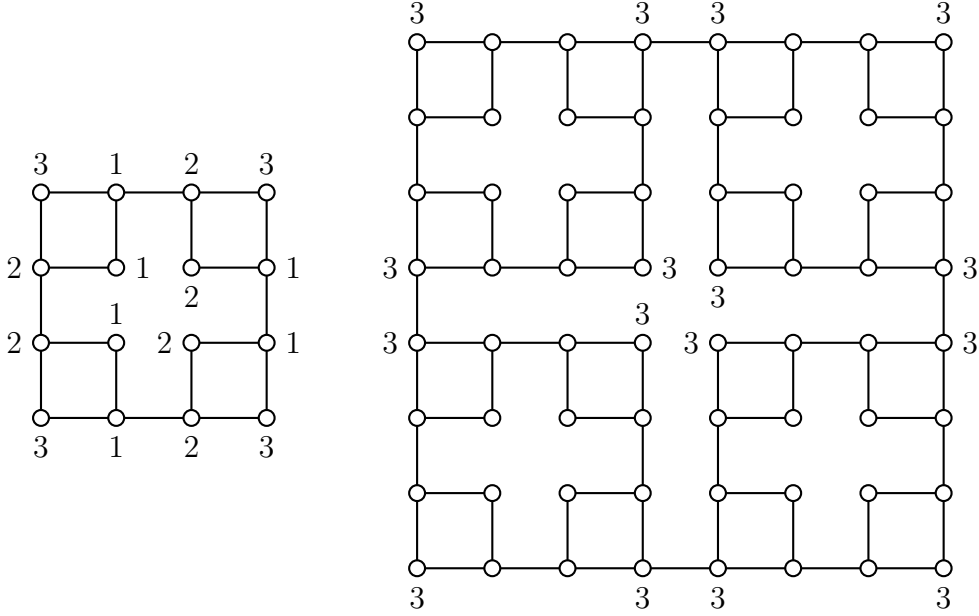

The labeling of $S_{C_4}^2$ from Fig.~\ref{fig:S_C_4} is easily checked to be injective. Now we iteratively proceed as indicated in the figure, that is, we four times use the labeling of $S_{C_4}^2$ to get a labeling of $S_{C_4}^3$. Based on the distribution of the color $3$, it is straightforward to check that also the labeling of $S_{C_4}^3$ is injective. The process can be repeated to get the desired conclusion.

Finally, notice that $S_{C_4}^n$ is not perfect injectively colorable because $|V(S_{C_4}^n)|=4^n$, which is not divisible by $\chi_i(S_{C_4}^n)=3$ for $n\ge 2$.
\end{proof}

Using similar approach as in the proof of Proposition~\ref{prop:S_C_4} one might deduce that if $n\ge 2$ and $k\ge 5$, then $\chi_i(S_{C_k}^n) = 3$. Moreover, one might also deduce that among all these generalized Sierpi\'nski graphs over cycles only $S_{C_6}^n$ is perfect injectively colorable.

\section{Rooted product graphs}
\label{sec:rooted}

A \emph{rooted graph} is a graph in which one vertex is labeled in a special way to distinguish it from other vertices. This vertex is called the \emph{root} of the graph. Let $G$ be a graph with vertex set $\{v_1,\ldots,v_n\}$. Let ${\cal H}$ be a sequence of $n$ rooted graphs $H_1,\ldots,H_n$. The \emph{rooted product graph} $G({\cal H})$ is the graph obtained by identifying the root of $H_i$ with $v_{i}$ (see \cite{GM}). We here consider the particular case of rooted product graphs where ${\cal H}$ consists of $n$  isomorphic rooted graphs \cite{Sch}. More formally, assuming that the root of $H$ is $v$, we define the rooted product graph $G\circ_{v}H=(V,E)$, where $V=V(G)\times V(H)$ and
$$E=\left(\bigcup_{i=1}^{n}\left\{(v_i,h)(v_i,h')\mid hh'\in E(H)\right\}\right)\bigcup \Big{\{}(v_i,v)(v_j,v)\mid v_iv_j\in E(G)\Big{\}}.$$

We remark that rooted product graphs can be seen as an instance of the operation called Sierpi\'nski product introduced in \cite{kpzz-2022}, and denoted by $G \otimes _f H$, where $f \colon V(G)\rightarrow V(H)$ is a function. The graph $G \otimes _f H$ has vertex set $V(G)\times V(H)$ and two vertices $(g,h),(g',h')\in V(G \otimes _f H)$ are adjacent if (i) $g=g'$ and $hh' \in E(H)$, or (ii)
$gg' \in E(G)$, $h=f(g')$ and $h'=f(g)$. In this sense, it can be readily seen that a rooted product graph $G\circ_{v}H$ represents a Sierpi\'nski product $G\otimes _f H$,  where $f$ is a constant function in the product, i.e., $f(u)=v$ for any $u\in V(G)$.

It is somehow natural to think that the injective chromatic number of a rooted product graph relates to that of the factors of the product. Indeed, the following basic bounds can be easily deduced for any graph $G$ and any rooted graph $H$ with root $v$.
\begin{equation}
\label{eq:trivial-bounds-rooted}
\max\{\chi_i(G),\chi_i(H)\}\le \chi_i(G\circ_v H)\le \chi_i(G)+\chi_i(H).
\end{equation}

Both bounds above are realizable. However, not all possible values between these bounds are reached. We next focus on these facts and show that $\chi_i(G\circ_v H)$ achieves only six values from the interval $[\max\{\chi_i(G),\chi_i(H)\},\dots, \chi_i(G)+\chi_i(H)]$.

\bigskip
From now on, in order to facilitate our exposition, given an integer $k\in [n]$, by $F_{k}$ we represent the subgraph of $G\circ_{v} H$ induced by vertices in $V(G)\cup V(H_{k})$.

To show that only six values from the interval $[\max\{\chi_i(G),\chi_i(H)\},\dots, \chi_i(G)+\chi_i(H)]$ (according to the bounds from \eqref{eq:trivial-bounds-rooted}) can be realized, we exhibit a closed formula for the injective chromatic number of rooted product graphs. To do so, we proceed with a series of lemmas that are giving the values of the injective chromatic number of $F_{k}$, under some conditions happening in $G$ and in $H$. For the sake of convenience, by assigning/giving a color to a vertex subset $S$ of a graph $G$ we mean assigning such color to all vertices in $S$.

We first remark that the root $v\in V(H)$ is identified with $v_{k}\in V(G)$. Hence, for instance $d_{G}(v)=d_{G}(v_{k})$. Throughout the remainder of this section, we consider $g$ as a $\chi_{i}(G)$-coloring, and in this sense, $\{U_{1},\ldots,U_{\chi_{i}(G)}\}$ as the vertex partition of $V(G)$ into open packings associated with $g$. We may assume that $g$ assigns the color $i$ to $U_{i}$ for each $i\in[\chi_{i}(G)]$. Now, in order to consider an injective coloring of $F_k$ for each $k\in [n]$, in concordance with the $\chi_{i}(G)$-coloring $g$, we may also assume that $N_{G}(v)\subseteq U_{1}\cup\ldots \cup U_{d_{G}(v)}$. Moreover, by simplicity we write $H=H_{k}$ in the proofs of the following lemmas, as $H_{k}\cong H$.

We first observe that such subgraph $F_k$ satisfies that
\begin{equation}\label{max}
\chi_{i}(F_{k})\geq \max\big{\{}\chi_{i}(G),\chi_{i}(H),d_{G}(v)+d_{H}(v)\big{\}}
\end{equation}
as both $G$ and $H$ are subgraphs of $F_{k}$ and $\chi_{i}(F_{k})\geq \Delta(F_{k})\geq d_{G}(v)+d_{H}(v)$. The proof of this is based on a case-by-case analysis. The general procedure is to extend $g$ to $F_{k}$ in such a way that the restriction of the resulting function to $V(H)$ turns out to be a $\chi_{i}(H)$-coloring using the colors in $[\chi_{i}(G)]$ along with the least number of colors not in $[\chi_{i}(G)]$.

\medskip
Our first lemma regarding the injective chromatic number of the graph $F_k$ reads as follows.

\begin{lemma}\label{L1}
If $g(v)\notin[d_{G}(v)]$ and $h(v)\in h\big{(}N_{H}(v)\big{)}$ for some $\chi_{i}(H)$-function $h=(V_{1},\ldots,V_{\chi_{i}(H)})$, then
$$\chi_{i}(F_{k})=\max\big{\{}\chi_{i}(G),\chi_{i}(H),d_{G}(v)+d_{H}(v)\big{\}}.$$
\end{lemma}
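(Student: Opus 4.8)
The plan is to prove the non-trivial inequality $\chi_{i}(F_{k})\le \max\{\chi_{i}(G),\chi_{i}(H),d_{G}(v)+d_{H}(v)\}$, since the reverse inequality is already recorded in~\eqref{max}. Write $m=\max\{\chi_{i}(G),\chi_{i}(H),d_{G}(v)+d_{H}(v)\}$. I would exhibit an injective $m$-coloring of $F_{k}$ by gluing the fixed $\chi_{i}(G)$-coloring $g$ of $G$ to a carefully relabeled copy of the $\chi_{i}(H)$-function $h$ on $V(H)=V(H_{k})$, respecting the identification of the root $v\in V(H)$ with $v_{k}\in V(G)$. Recall the standing assumptions: $g(v)\notin[d_{G}(v)]$, so in particular $g(v)>d_{G}(v)$, and $N_{G}(v)\subseteq U_{1}\cup\cdots\cup U_{d_{G}(v)}$, so none of the $G$-neighbors of $v_{k}$ carries the color $g(v)$. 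The key hypothesis from the lemma is that $h$ can be chosen so that $h(v)\in h(N_{H}(v))$, i.e.\ some $H$-neighbor of the root already repeats the root's color; this is exactly what will let us avoid spending an extra color at the junction.

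First I would reconcile the two colors at the root. Apply a permutation of the color set $[\chi_{i}(H)]$ to $h$ so that the root receives color $g(v)$; this does not change the property that $h$ is a $\chi_{i}(H)$-function nor the property $h(v)\in h(N_{H}(v))$. Next I would relabel the remaining colors of $h$ to land inside $\{1,\dots,m\}\setminus\{g(v)\}$ while keeping the colors of $N_{H}(v)$ pairwise distinct and disjoint from the colors $g(N_{G}(v))\subseteq[d_{G}(v)]$ used on the $G$-side neighbors of the root. Here is where the degree bound enters: the $H$-neighbors of $v$ need at most $d_{H}(v)$ colors among themselves (in fact, since $h(v)$ is repeated on one of them, at most $d_{H}(v)$ colors suffice for $N_{H}[v]$ with the root's color counted), and together with the $d_{G}(v)$ colors on $N_{G}(v)$ and the color $g(v)$ itself, everything can be packed into a palette of size $d_{G}(v)+d_{H}(v)\le m$. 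Concretely I would reserve $\{1,\dots,d_{G}(v)\}$ for the $G$-neighbors of the root (already used by $g$), let the repeated color at the root be $g(v)$, and send the other colors appearing in $h$ into the block $\{d_{G}(v)+1,\dots,m\}\setminus\{g(v)\}$, which has at least $\chi_{i}(H)-1$ available slots; one checks $m-d_{G}(v)-1\ge \chi_{i}(H)-1$ because $m\ge d_{G}(v)+d_{H}(v)\ge d_{G}(v)+1$ and $m\ge\chi_{i}(H)$, and a small case split handles whether $\chi_i(H)-1$ fits. Define $c$ on $F_{k}$ to be $g$ on $V(G)$ and this relabeled $h$ on $V(H_{k})$; the two agree at $v_{k}=v$ by construction.

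The remaining task is to verify that $c$ is injective on $F_{k}$, which I would do by a vertex-by-vertex check. For a vertex $x\in V(G)$ with $x\ne v_{k}$, its $F_{k}$-neighborhood is its $G$-neighborhood, and injectivity there is inherited from $g$ being an injective coloring of $G$. For a vertex $y\in V(H_{k})$ with $y\ne v$, its $F_{k}$-neighborhood is its $H$-neighborhood, and injectivity is inherited from $h$. The only delicate vertex is the root $v_{k}=v$, whose $F_{k}$-neighborhood is $N_{G}(v_{k})\cup N_{H}(v)$: the colors on $N_{G}(v_{k})$ lie in $\{1,\dots,d_{G}(v)\}$ and are pairwise distinct (from $g$), the colors on $N_{H}(v)$ lie in $\{g(v)\}\cup(\{d_{G}(v)+1,\dots,m\}\setminus\{g(v)\})$ and are pairwise distinct (from $h$), and these two ranges are disjoint by our choice of relabeling — so all neighbors of the root get distinct colors. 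The one subtlety I would flag as the main obstacle is ensuring there are genuinely enough slots in the block $\{d_{G}(v)+1,\dots,m\}$ to host all of $h$'s colors other than $g(v)$ without collision with $[d_{G}(v)]$; this is a counting argument driven by $m\ge d_{G}(v)+d_{H}(v)$ together with $m\ge\chi_{i}(H)$, and it may require separating the sub-case where the binding term of $m$ is $\chi_{i}(H)$ (where one has room to spare) from the sub-case where it is $d_{G}(v)+d_{H}(v)$ (where the hypothesis $h(v)\in h(N_{H}(v))$ is used to save the one color that would otherwise overflow).
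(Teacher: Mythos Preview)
Your plan is the same as the paper's: keep $g$ on $G$, relabel $h$ on $H_k$ so the root receives $g(v)$ and the colors on $N_H(v)$ avoid $[d_G(v)]$, then verify injectivity at the root. The paper carries this out via a lengthy case split on the relative sizes of $\chi_i(G)$, $\chi_i(H)$, $d_G(v)$, $d_H(v)$; your conceptual packaging is cleaner, but one counting step is not right as written.

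You assert that the block $\{d_G(v)+1,\dots,m\}\setminus\{g(v)\}$ has at least $\chi_i(H)-1$ slots, i.e.\ $m\ge \chi_i(H)+d_G(v)$. This does not follow from $m\ge d_G(v)+d_H(v)$ and $m\ge\chi_i(H)$: take $d_G(v)=2$, $d_H(v)=1$, $\chi_i(G)=3$, $\chi_i(H)=10$, so $m=10$ but $\chi_i(H)+d_G(v)=12$. In such cases you cannot push \emph{all} of $h$'s non-root colors into the upper block. Your diagnosis of the subcases is also reversed: when the binding term is $\chi_i(H)$ you may be \emph{short} of room in the upper block, not flush with it.

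The fix is simple and is exactly what the paper's case analysis implements: only the $d_H(v)$ colors of $h(N_H[v])$ must land in $\{d_G(v)+1,\dots,m\}$ (and $m-d_G(v)\ge d_H(v)$ gives precisely enough room, with the hypothesis $h(v)\in h(N_H(v))$ ensuring $|h(N_H[v])|=d_H(v)$ rather than $d_H(v)+1$). The remaining $\chi_i(H)-d_H(v)$ colors of $h$, which do not appear on any neighbor of the root, may be relabeled into the leftover slots of $[m]$, including $[d_G(v)]$; since $m\ge\chi_i(H)$ there are enough slots overall. With this correction your verification paragraph goes through unchanged, because injectivity at the root only needs $h(N_H(v))\cap[d_G(v)]=\emptyset$, not that all of $h$'s palette avoids $[d_G(v)]$.
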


\begin{proof}
We first observe that $\chi_{i}(G)\geq d_{G}(v)+1$ since $g(v)\notin[d_{G}(v)]$ (i.e., $v$ and all its neighbors have different colors). We need to distinguish two cases depending on the behaviors of $\chi_{i}(G)$ and $\chi_{i}(H)$. Without loss of generality, for the graph $H=H_k$, assume that $N_{H}(v)\subseteq V_{1}\cup\ldots \cup V_{d_{H}(v)}$ and that $v\in V_{1}$. Also, for the graph $G$, assume that $v\in U_{d_{G}(v)+1}$.

\medskip
\noindent
\textbf{Case 1.} $\chi_{i}(G)\geq d_{H}(v)+d_{G}(v)$. \\
If $\chi_{i}(H)=d_{H}(v)$, then we extend $g$ to $F_{k}$ by  respectively assigning the colors $d_{G}(v)+1,\ldots,d_{G}(v)+d_{H}(v)$ to $V_{1},\ldots,V_{d_{H}(v)}$. Note that the resulting function $g_{1}$ is an injective coloring of $F_{k}$ with $\chi_{i}(G)$ colors. Therefore, $\chi_{i}(F_{k})\le \chi_{i}(G)$, which is indeed an equality since $G$ is a subgraph of $F_{k}$. Now, if $\chi_{i}(H)>d_{H}(v)$, then we proceed as follows. If $\eta=\chi_{i}(G)-d_{G}(v)-d_{H}(v)\geq \chi_{i}(H)-d_{H}(v)=\varphi$, then we extend $g_{1}$ to a new function $g_{2}$, by respectively giving the colors $d_{G}(v)+d_{H}(v)+1,\ldots,d_{G}(v)+\chi_{i}(H)$ to $V_{d_{H}(v)+1},\ldots,V_{\chi_{i}(H)}$. It is readily seen that $g_{2}$ is an injective coloring of $F_{k}$ with $\chi_{i}(G)$ colors. This similarly results in the equality $\chi_{i}(F_{k})=\chi_{i}(G)$. Hence, assume $\eta<\varphi$ and consider two cases depending on $\eta$.

\smallskip
\noindent
\textbf{Subcase 1.1.} $\eta=0$.\\
If $\varphi\leq d_{G}(v)$, then we assign $\varphi$ colors from $[d_{G}(v)]$ to $V_{i}$ for $i=d_{H}(v)+1,\ldots,\chi_{i}(H)$. This gives us an injective coloring of $F_{k}$ using $\chi_{i}(G)$ colors, and hence $\chi_{i}(F_{k})=\chi_{i}(G)$. If $\varphi>d_{G}(v)$, then we assign the colors from $[d_{G}(v)]$ to $V_{i}$ with $i=d_{H}(v)+1,\ldots,d_{H}(v)+d_{G}(v)$. In addition, we need $\chi_{i}(H)-d_{G}(v)-d_{H}(v)$ new colors for the rest of open packings in $H$. This results in an injective coloring of $F_{k}$ with $\chi_{i}(H)-d_{G}(v)-d_{H}(v)+\chi_{i}(G)=\chi_{i}(H)$ colors, and therefore $\chi_{i}(F_{k})=\chi_{i}(H)$.

\smallskip
\noindent
\textbf{Subcase 1.2.} $\eta>0$. \\
Let $g_{2}$ be an extension of $g_1$ that respectively assigns the values $d_{G}(v)+d_{H}(v)+1,\ldots,\chi_{i}(G)$ to $V_{d_{H}(v)+1}, \ldots,  V_{\chi_{i}(G)-d_{G}(v)}$. In view of this, $V_{\chi_{i}(G)-d_{G}(v)+1},\ldots, V_{\chi_{i}(H)}$ are open packings of $H$ which have not been injectively colored by $g_{2}$. We need to consider two possibilities.

\smallskip
\noindent
\textbf{Subcase 1.2.1.} $\chi_{i}(G)\geq \chi_{i}(H)$. \\
It follows that $\zeta=\chi_{i}(H)-\chi_{i}(G)+d_{G}(v)\leq d_{G}(v)$. In such a situation, $g_{2}$ can be extended to $F_{k}$ by assigning the colors $1,\ldots,\zeta$ to the rest of open packings in $H$. Note that the resulting function is an injective coloring of $F_{k}$ with $\chi_{i}(G)$ colors, and hence $\chi_{i}(F_{k})=\chi_{i}(G)$.

\smallskip
\noindent
\textbf{Subcase 1.2.2.} $\chi_{i}(G)<\chi_{i}(H)$. \\
This shows that $\zeta>d_{G}(v)$. In such a situation, we first assign the color $i$ to $V_{\chi_{i}(G)-d_{G}(v)+i}$ for each $i\in[d_{G}(v)]$. We next assign $\chi_{i}(H)-\chi_{i}(G)$ new colors to the rest of open packings in $H$. This results in an injective coloring of $F_{k}$ with $\chi_{i}(H)$ colors, and hence $\chi_{i}(F_{k})=\chi_{i}(H)$.

\medskip
\noindent
\textbf{Case 2.} $\chi_{i}(G)<d_{H}(v)+d_{G}(v)$. \\
We extend $g$ to $g_{1}$ by respectively assigning the colors $d_{G}(v)+1,\ldots,\chi_{i}(G)$ to $V_{1},\ldots,V_{\chi_{i}(G)-d_{G}(v)}$, as well as, we assign $d_{H}(v)-\chi_{i}(G)+d_{G}(v)$ new colors to $V_{\chi_{i}(G)-d_{G}(v)+1},\ldots,V_{d_{H}(v)}$. If $\chi_{i}(H)=d_{H}(v)$, then $g_{1}$ turns out to be an injective coloring of $F_{k}$ with $d_{G}(v)+d_{H}(v)$ colors. Therefore, $\chi_{i}(F_{k})=d_{H}(v)+d_{G}(v)$. Suppose now that $\chi_{i}(H)>d_{H}(v)$. We distinguish two more possibilities.

\smallskip
\noindent
\textbf{Subcase 2.1.} $\chi_{i}(H)\le d_{G}(v)+d_{H}(v)$. \\
In such situation, the function $g_{1}$ can be extended to $F_{k}$ by respectively assigning the colors $1,\ldots,\chi_{i}(H)-d_{H}(v)$ to $V_{d_{H}(v)+1},\ldots,V_{\chi_{i}(H)}$. This gives us an injective coloring of $F_{k}$ with $d_{H}(v)+d_{G}(v)$ colors. So, $\chi_{i}(F_{k})=d_{H}(v)+d_{G}(v)$.

\smallskip
\noindent
\textbf{Subcase 2.2.} $\chi_{i}(H) > d_{G}(v)+d_{H}(v)$. \\
Now, in order to extend $g_{1}$ to $F_{k}$, we respectively assign the colors $1,\ldots,d_{G}(v)$ to $V_{d_{H}(v)+1},\ldots,V_{d_{H}(v)+d_{G}(v)}$, as well as, $\chi_{i}(H)-d_{G}(v)-d_{H}(v)$ new colors to the rest of open packings in $H$. This leads to an injective coloring of $F_{k}$ with $\chi_{i}(H)$ colors, and hence $\chi_{i}(F_{k})=\chi_{i}(H)$.

\bigskip
One might think that in order to complete our proof, some other cases like for instance $\chi_{i}(H)\leq d_{H}(v)+d_{G}(v)$ or $\chi_{i}(H) > d_{H}(v)+d_{G}(v)$ need to be considered. However, they are indeed implicitly checked in the two cases above (with the corresponding subcases).

\medskip
In conclusion, we have proved that $\chi_{i}(F_{k})\in \{\chi_{i}(G),\chi_{i}(H),d_{G}(v)+d_{H}(v)\}$. This leads to $\chi_{i}(F_{k})=\max\{\chi_{i}(G),\chi_{i}(H),d_{G}(v)+d_{H}(v)\}$ due to (\ref{max}).
\end{proof}

From this point on, three similar lemmas to the one above shall be proved. These lemmas are considering the remaining cases regarding the inclusion or not of $g(v)$ and $h(v)$ in $[d_{G}(v)]$ and $h\big{(}N_{H}(v)\big{)}$ (for some/each $\chi_{i}(H)$-function $h$), respectively. Some of the arguments are similar to the ones in the proof of Lemma \ref{L1}.

\begin{lemma}\label{L2}
If $g(v)\notin[d_{G}(v)]$ and $h(v)\notin h\big{(}N_{H}(v)\big{)}$ for each $\chi_{i}(H)$-function $h$, then
$$\chi_{i}(F_{k})\in \big{\{}\chi_{i}(G),\chi_{i}(H),d_{G}(v)+d_{H}(v),d_{G}(v)+d_{H}(v)+1\big{\}}.$$
\end{lemma}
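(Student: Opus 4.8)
The plan is to combine the lower bound $\chi_i(F_k)\ge\max\{\chi_i(G),\chi_i(H),d_G(v)+d_H(v)\}$ recorded in \eqref{max} with the single upper estimate
$$\chi_i(F_k)\le\max\{\chi_i(G),\chi_i(H),d_G(v)+d_H(v)+1\}.$$
These two bounds differ by at most one; the lower one equals $\max\{\chi_i(G),\chi_i(H),d_G(v)+d_H(v)\}$, which is one of $\chi_i(G),\chi_i(H),d_G(v)+d_H(v)$, while the upper one is either the same number or $d_G(v)+d_H(v)+1$. Hence $\chi_i(F_k)$ is confined to the stated four-element set as soon as the upper estimate is proved, so all the work goes into that estimate.

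For the upper bound I would keep the fixed $\chi_i(G)$-coloring $g$ on the $G$-part of $F_k$ and recolor only the copy $H=H_k$, much as in the proof of Lemma~\ref{L1}. Since $g(v)\notin[d_G(v)]$ we may assume $N_G(v)\subseteq U_1\cup\cdots\cup U_{d_G(v)}$ and $v\in U_{d_G(v)+1}$, so $\chi_i(G)\ge d_G(v)+1$; and since by hypothesis every $\chi_i(H)$-function $h$ satisfies $h(v)\notin h(N_H(v))$, we fix one such $h=(V_1,\dots,V_{\chi_i(H)})$, relabel so that $v\in V_1$ and $N_H(v)\subseteq V_2\cup\cdots\cup V_{d_H(v)+1}$ (which uses $\chi_i(H)\ge d_H(v)+1$), and recolor $H$ by setting $V_1\mapsto d_G(v)+1$, $V_i\mapsto d_G(v)+i$ for $2\le i\le d_H(v)+1$, and giving the remaining classes $V_{d_H(v)+2},\dots,V_{\chi_i(H)}$ pairwise distinct colors chosen greedily first from $\{1,\dots,d_G(v)\}$, then from $\{d_G(v)+d_H(v)+2,\dots,\chi_i(G)\}$, and only then from fresh colors $\chi_i(G)+1,\chi_i(G)+2,\dots$. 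A short count, split according to whether $\chi_i(G)\le d_G(v)+d_H(v)+1$ or not, then shows that the largest color used is at most $\max\{\chi_i(G),\chi_i(H),d_G(v)+d_H(v)+1\}$; when fresh colors are actually needed the total turns out to equal $\chi_i(H)$.

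It remains to check that $g$ together with this recoloring is an injective coloring of $F_k$, and this is the only genuinely delicate point. The vertex $v=v_k$ is the unique vertex of $F_k$ with neighbors on both sides; it keeps the color $d_G(v)+1=g(v)$, its $G$-neighbors exhaust the colors $\{1,\dots,d_G(v)\}$, and its $H$-neighbors use $d_H(v)$ distinct colors from $\{d_G(v)+2,\dots,d_G(v)+d_H(v)+1\}$, so $N_{F_k}(v)$ is rainbow. Every other vertex of $G$ sees only $G$, where $g$ is injective, and every vertex of $H$ other than $v$ sees only $H$, where the recoloring is injective because it is a bijective relabeling of the classes of the injective coloring $h$. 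A color reused on both sides could only clash at a vertex seeing it twice, i.e.\ at $v$; but the only $H$-classes reusing a color at most $\chi_i(G)$ are the $V_j$ with $j\ge d_H(v)+2$, and none of these meets $N_H(v)$, so no clash occurs. The main obstacle is precisely this bookkeeping of reused colors at the identification vertex: unlike in Lemma~\ref{L1}, here the color class of $v$ in $H$ contains none of $v$'s $H$-neighbors, so keeping the color sets of $N_G(v)$ and $N_H(v)$ disjoint forces the color $d_G(v)+d_H(v)+1$ onto $V_{d_H(v)+1}$, which is exactly why the value $d_G(v)+d_H(v)+1$ must be included in the list; the rest of the argument mirrors Lemma~\ref{L1}.
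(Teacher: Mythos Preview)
Your argument is correct and rests on the same recoloring idea as the paper: keep $g$ on $G$, relabel the classes of a fixed $\chi_i(H)$-coloring $h$ so that $v$ keeps color $d_G(v)+1$, the $H$-neighbours of $v$ receive colors $d_G(v)+2,\dots,d_G(v)+d_H(v)+1$ disjoint from the colors $1,\dots,d_G(v)$ of $N_G(v)$, and the remaining $h$-classes reuse whatever colors are left. The paper carries this out through an exhaustive case split (Case~1/Case~2 with several subcases), in each branch computing the exact number of colors and thereby pinning $\chi_i(F_k)$ to one of the four listed values. You instead fold all cases into a single greedy recoloring and extract only the estimate $\chi_i(F_k)\le\max\{\chi_i(G),\chi_i(H),d_G(v)+d_H(v)+1\}$, then sandwich it against \eqref{max}; since the upper and lower bounds differ by at most one, membership in the four-element set follows at once. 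This is a genuinely tidier packaging of the same construction: you trade the paper's precise case-by-case values for a short uniform bound, which is all the lemma actually requires.

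One small wording issue: when $\chi_i(G)\le d_G(v)+d_H(v)$, the colors you call ``fresh colors $\chi_i(G)+1,\chi_i(G)+2,\dots$'' overlap the block $\{d_G(v)+2,\dots,d_G(v)+d_H(v)+1\}$ already assigned to $V_2,\dots,V_{d_H(v)+1}$, so the literal recipe would repeat a color on two $h$-classes. Your count (``split according to whether $\chi_i(G)\le d_G(v)+d_H(v)+1$ or not'') makes clear you intend the first unused color, namely $d_G(v)+d_H(v)+2$ in that regime, and with that reading the bound $\max\{\chi_i(G),\chi_i(H),d_G(v)+d_H(v)+1\}$ comes out exactly as you claim; just state the fresh colors as $\max\{\chi_i(G),d_G(v)+d_H(v)+1\}+1,\dots$ to make the construction literally correct.
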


\begin{proof}
The assumptions imply that $\chi_{i}(G)\geq d_{G}(v)+1$ and $\chi_{i}(H)\geq d_{H}(v)+1$. We need to distinguish two cases depending on $\chi_{i}(G)$, $d_{G}(v)$ and $d_{H}(v)$. For the sake of simplicity, we assume $N_{H}(v)\subseteq V_{1}\cup \ldots \cup V_{d_{H}(v)}$, $v\in V_{d_{H}(v)+1}$ and $v\in U_{d_{G}(v)+1}$.

\medskip
\noindent
\textbf{Case 1.} $\chi_{i}(G)\geq d_{G}(v)+d_{H}(v)+1$. \\
If $\chi_{i}(H)=d_{H}(v)+1$, then the extension $g_{1}$ of $g$ that respectively assigns the colors $d_{G}(v)+1,d_{G}(v)+2,\ldots,d_{G}(v)+d_{H}(v)+1$ to $V_{d_{H}(v)+1},V_{1},\ldots,V_{d_{H}(v)}$ defines an injective coloring of $F_{k}$ using $\chi_{i}(G)$ colors. This leads to $\chi_{i}(F_{k})=\chi_{i}(G)$. Suppose now that $\chi_{i}(H)>d_{H}(v)+1$. If
\begin{center}
$\mu=\chi_{i}(G)-d_{G}(v)-d_{H}(v)-1\geq \chi_{i}(H)-d_{H}(v)-1=\xi$,
\end{center}
then an extension $g_{2}$ of $g_{1}$ assigning the color $i$ to $V_{i-d_{G}(v)}$, for each $i=d_{G}(v)+d_{H}(v)+2,\ldots,d_{G}(v)+\chi_{i}(H)$, defines an injective coloring of $F$ with $\chi_{i}(G)$ colors. Hence, $\chi_{i}(F_{k})=\chi_{i}(G)$. Letting $\mu<\xi$, we need to consider two more possibilities.

\smallskip
\noindent
\textbf{Subcase 1.1.} $\mu=0$. \\
First note that the $\xi$ open packings $V_{d_{H}(v)+2},\ldots,V_{\chi_{i}(H)}$ of $H$ have not been colored under $g_{1}$. If $\xi\leq d_{G}(v)$, then we respectively assign the colors $1,\ldots,\xi$ to $V_{d_{H}(v)+2},\ldots,V_{\chi_{i}(H)}$. Note that the resulting function is an injective coloring with $\chi_{i}(G)$ colors. So, $\chi_{i}(F_{k})=\chi_{i}(G)$. If $\xi>d_{G}(v)$, then we first respectively assign the colors $1,\ldots,d_{G}(v)$ to $V_{d_{H}(v)+2},\ldots,V_{d_{H}(v)+d_{G}(v)+1}$. Next, we assign $\xi-d_{G}(v)$ new colors to the rest of the open packings of $H$. The resulting function turns out to be an injective coloring of $F_{k}$ using
\begin{center}
$\chi_{i}(G)+\xi-d_{G}(v)=\chi_{i}(G)+\chi_{i}(H)-d_{H}(v)-1-d_{G}(v)=\chi_{i}(H)$
\end{center}
colors. So, we deduce that $\chi_{i}(F_{k})\leq \chi_{i}(H)$, which means $\chi_{i}(F_{k})=\chi_{i}(H)$ in view of the inequality (\ref{max}).

\smallskip
\noindent
\textbf{Subcase 1.2.} $\mu>0$. \\
As an extension of $g_{1}$, we first assign the color $i$ to $V_{i-d_{G}(v)}$ when $i\in \{d_{G}(v)+d_{H}(v)+2,\ldots,\chi_{i}(G)\}$. In this situation, the $\xi-\mu$ open packings $V_{\chi_{i}(G)-d_{G}(v)+1},\ldots,V_{\chi_{i}(H)}$ have not been colored under $g_{1}$. We now distinguish two possibilities.

\smallskip
\noindent
\textbf{Subcase 1.2.1.} $\chi_{i}(G)\geq \chi_{i}(H)$. This implies that $\xi-\mu\leq d_{G}(v)$. So, by respectively assigning the colors $1,\ldots,\xi-\mu$ to $V_{\chi_{i}(G)-d_{G}(v)+1},\ldots,V_{\chi_{i}(H)}$, we obtain an injective coloring of $F_{k}$ with $\chi_{i}(G)$ colors. Therefore, $\chi_{i}(F_{k})=\chi_{i}(G)$.

\smallskip
\noindent
\textbf{Subcase 1.2.2.} $\chi_{i}(G)<\chi_{i}(H)$. \\
This shows that $\xi-\mu>d_{G}(v)$. In this situation, we respectively assign the values $1,\ldots,d_{G}(v)$ to $V_{\chi_{i}(G)-d_{G}(v)+1},\ldots,V_{\chi_{i}(G)}$. Also, we assign $\chi_{i}(H)-\chi_{i}(G)$ new colors to the rest of open packings in $H$. Note that the resulting coloring of $F_{k}$ is injective, and that it uses $\chi_{i}(H)$ colors. Hence, $\chi_{i}(F_{k})=\chi_{i}(H)$.

\medskip
\noindent
\textbf{Case 2.} $\chi_{i}(G)<d_{H}(v)+d_{G}(v)+1$. \\
Consider first that $\chi_{i}(H)=d_{H}(v)+1$. In such a situation, let $g_{1}$ be an extension of $g$ that respectively assigns the colors $d_{G}(v)+1,d_{G}(v)+2,\ldots,\chi_{i}(G)$ to $V_{d_{H}(v)+1},V_{1},\ldots,V_{\chi_{i}(G)-d_{G}(v)-1}$, as well as, $\varphi=\chi_{i}(H)-\chi_{i}(G)+d_{G}(v)$ new colors to $V_{\chi_{i}(G)-d_{G}(v)},\ldots,V_{\chi_{i}(H)}$. This defines an injective coloring of $F_{k}$ with $\varphi+\chi_{i}(G)=d_{G}(v)+d_{H}(v)+1$ colors, and hence $\chi_{i}(F_{k})\leq d_{G}(v)+d_{H}(v)+1$. This shows that $\chi_{i}(F)\in \{d_{G}(v)+d_{H}(v),d_{G}(v)+d_{H}(v)+1\}$ due to (\ref{max}). Suppose now that $\chi_{i}(H)>d_{H}(v)+1$. We need to consider two possibilities.

\smallskip
\noindent
\textbf{Subcase 2.1.} $\chi_{i}(G)\geq \chi_{i}(H)$. \\
Due to the initial inequality of Case 2, we get $d_{G}(v)+d_{H}(v)+1>\chi_{i}(H)$. Therefore, $d_{G}(v)>\chi_{i}(H)-d_{H}(v)-1=\psi$. Hence, $g_{1}$ can be extended to $F_{k}$ by respectively assigning the colors $1,\ldots,\psi$ to $V_{d_{H}(v)+2},\ldots,V_{\chi_{i}(H)}$. This gives an injective coloring of $F_{k}$ with $d_{G}(v)+d_{H}(v)+1$ colors. Consequently, $\chi_{i}(F_{k})\in \{d_{G}(v)+d_{H}(v),d_{G}(v)+d_{H}(v)+1\}$.

\smallskip
\noindent
\textbf{Subcase 2.2.} $\chi_{i}(G)<\chi_{i}(H)$. \\
If $\psi\leq d_{G}(v)$, then we have the same conclusion as in Subcase 2.1. So, let $\psi>d_{G}(v)$. Let $g_{2}$ be an extension of $g_{1}$ that assigns the color $i$ to $V_{i+d_{H}(v)+1}$ for each $i\in[d_{G}(v)]$. In such a situation, we give $\psi-d_{G}(v)$ new colors to the rest of the open packings in $H$. This process leads to an injective coloring of $F_{k}$ with at most $d_{G}(v)+d_{H}(v)+1+\psi-d_{G}(v)$ colors. Therefore, $\chi_{i}(F_{k})\leq \chi_{i}(H)$. This implies that $\chi_{i}(F_{k})=\chi_{i}(H)$.
\end{proof}

\begin{lemma}\label{L3}
Let $g(v)\in[d_{G}(v)]$ and $h(v)\in h\big{(}N_{H}(v)\big{)}$ for some $\chi_{i}(H)$-function $h=(V_{1},\ldots,V_{\chi_{i}(H)})$. Then,
$$\chi_{i}(F_{k})\in \big{\{}\chi_{i}(G),\chi_{i}(H),\chi_{i}(G)+1,\chi_{i}(H)+1,d_{G}(v)+d_{H}(v)\big{\}}.$$
\end{lemma}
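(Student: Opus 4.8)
The plan is to mirror the structure of the proof of Lemma~\ref{L1} but to track more carefully what happens near the root, since the new hypothesis $g(v)\in[d_G(v)]$ means that $v$ shares its $G$-color with one of its $G$-neighbors, and this forces some extra care. As before, fix the $\chi_i(G)$-coloring $g$ with partition $\{U_1,\dots,U_{\chi_i(G)}\}$, assume $N_G(v)\subseteq U_1\cup\cdots\cup U_{d_G(v)}$, and use the hypothesis to say $v\in U_j$ for some $j\in[d_G(v)]$; say $j=1$. On the $H$ side, fix a $\chi_i(H)$-function $h=(V_1,\dots,V_{\chi_i(H)})$ with $h(v)\in h(N_H(v))$, so we may take $N_H(v)\subseteq V_1\cup\cdots\cup V_{d_H(v)}$ and $v\in V_1$, i.e.\ $v$ shares its $H$-color with one of its $H$-neighbors as well. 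The key local observation is then: in $F_k$ the vertex $v=v_k$ has $G$-neighbors colored (by $g$) with the colors $1,\dots,d_G(v)$ and $H$-neighbors that must receive colors distinct from each other and from all of $1,\dots,d_G(v)$; but since $v$ itself already lies in color class $1$, and since $v$ has an $H$-neighbor that could in principle be recolored, we have a little slack compared with Lemma~\ref{L1}. The five candidate values in the statement reflect exactly the cases where this slack does or does not suffice: $\chi_i(G)$ and $\chi_i(H)$ when one factor's palette absorbs the other, $d_G(v)+d_H(v)$ when the degree bound at $v$ dominates, and the ``$+1$'' variants when recoloring near the root is unavoidable.

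Concretely, I would split into the same two principal cases as before, $\chi_i(G)\ge d_G(v)+d_H(v)$ and $\chi_i(G)<d_G(v)+d_H(v)$, and within each compare $\chi_i(H)$ with the relevant thresholds. The construction is always the same in spirit: start from $g$, then colour the open packings $V_1,\dots,V_{\chi_i(H)}$ of $H_k$ greedily, reusing colours from $[\chi_i(G)]$ whenever a colour is available that does not collide at $v_k$ or within $H_k$, and only introducing new colours when forced. The one genuinely new wrinkle versus Lemma~\ref{L1} is the colouring of $V_1$ (the class containing the root): since $v_k$ already has colour $1$ via $g$, we must colour $V_1$ with colour $1$, but $V_1$ also contains an $H$-neighbour of $v$, which is harmless because that neighbour's colour only needs to differ from the colours of \emph{its} neighbours, and $v_k$'s colour~$1$ is allowed to reappear at distance~$2$. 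What can fail is that, after placing colour~$1$ on $V_1$, the colours $2,\dots,d_G(v)$ on the remaining $H$-neighbour classes might clash with the $G$-side neighbours of $v_k$; this is exactly where, in the tight case $\chi_i(G)=\chi_i(H)=d_H(v)$ (or nearby), one extra colour is needed and we land on $\chi_i(G)+1$ or $\chi_i(H)+1$. In each subcase I would exhibit the explicit extension, verify injectivity by the usual two checks (no vertex of $G$ sees a repeated colour among its $F_k$-neighbours; no vertex of $H_k$ does either, including $v_k$ whose neighbourhood in $F_k$ is $N_G(v)\cup N_{H_k}(v)$), and count the colours used.

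For the lower bound, in each subcase the value produced is one of $\chi_i(G)$, $\chi_i(H)$, $\chi_i(G)+1$, $\chi_i(H)+1$, $d_G(v)+d_H(v)$; combining the constructed upper bound with~\eqref{max} pins down $\chi_i(F_k)$ in the subcases where the upper bound already equals $\max\{\chi_i(G),\chi_i(H),d_G(v)+d_H(v)\}$, and in the remaining (tight) subcases one argues separately that $\chi_i(F_k)$ cannot be smaller than the stated ``$+1$'' value — typically because $v_k$ together with its $2d_G(v)+\dots$ nearby vertices already forces that many colours, or because an averaging/packing obstruction at the root rules out the smaller palette. I expect the main obstacle to be precisely these lower-bound arguments in the ``$+1$'' subcases: showing that no injective colouring of $F_k$ can avoid one colour beyond $\max\{\chi_i(G),\chi_i(H)\}$ requires a local structural argument around $v_k$ (counting the colours that are ``used up'' by $N_G(v)\cup N_{H}(v)\cup\{v\}$ and arguing that the $\chi_i(H)$-classes of $H$ cannot all be squeezed into $[\chi_i(G)]$ when the root's colour is already committed). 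The bookkeeping across cases is heavy but routine; the conceptual content is entirely in that local obstruction at the root.
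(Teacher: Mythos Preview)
Your plan contains a concrete error at its central point. You write that, since $v_k$ carries $g$-colour $1$, you ``must colour $V_1$ with colour $1$,'' and that this is harmless because the $H$-neighbour $w\in V_1$ of $v$ only needs colours distinct from those of \emph{its} neighbours. But you have forgotten to check injectivity at $v_k$ itself: by the convention $N_G(v)\subseteq U_1\cup\cdots\cup U_{d_G(v)}$ and the fact that $g$ is injective, exactly one $G$-neighbour of $v_k$ already has colour~$1$. If you also give colour~$1$ to $w\in N_H(v)$, then $v_k$ has two neighbours (one in $G$, one in $H_k$) with the same colour, violating the injective condition. So your proposed extension is not an injective colouring of $F_k$. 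The paper avoids this by \emph{not} insisting that $V_1$ be monochromatic: it assigns a fresh colour $d_G(v)+1$ to $V_1\setminus\{v\}$ while leaving $v$ with its $g$-colour~$1$. Once you split $V_1$ this way, the bookkeeping becomes parallel to Lemma~\ref{L1}, except that the colour~$1$ is now unavailable for re-use on $H_k$ (since it already appears at both $v$ and a $G$-neighbour of $v$); this is exactly what produces the occasional ``$+1$'' in the count.

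Your second paragraph also over-commits. You propose, for the ``$+1$'' subcases, to argue that $\chi_i(F_k)$ \emph{cannot} drop to $\chi_i(G)$ (resp.\ $\chi_i(H)$). That is neither required nor, in general, provable here: the lemma only asserts membership in the five-element set, and the paper in those subcases simply exhibits an injective colouring with $\chi_i(G)+1$ (resp.\ $\chi_i(H)+1$) colours and concludes $\chi_i(F_k)\in\{\chi_i(G),\chi_i(G)+1\}$ (resp.\ $\{\chi_i(H),\chi_i(H)+1\}$) via~\eqref{max}. The ``local obstruction at the root'' you anticipate as the main obstacle is therefore not needed at all.
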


\begin{proof}
With the assumptions given in the statement of the lemma, we may assume that $N_{H}(v)\subseteq V_{1}\cup\ldots \cup V_{d_{H}(v)}$, $v\in V_{1}$ and $v\in U_{1}$. We distinguish two cases depending on $\chi_{i}(G)$, $d_{G}(v)$ and $d_{H}(v)$.

\medskip
\noindent
\textbf{Case 1.} $\chi_{i}(G)\geq d_{H}(v)+d_{G}(v)$. \\
Let $\chi_{i}(H)=d_{H}(v)$. In such a situation, assume $g_1$   respectively assigns the colors $d_{G}(v)+1,\ldots,d_{G}(v)+d_{H}(v)$ to $V_{1}\setminus \{v\},V_{2},\ldots,V_{d_{H}(v)}$. This results in the existence of an injective coloring of $F_{k}$ with $\chi_{i}(G)$ colors, and hence $\chi_{i}(F_{k})=\chi_{i}(G)$. Now let $\chi_{i}(H)>d_{H}(v)$. If $\vartheta=\chi_{i}(G)-d_{G}(v)-d_{H}(v)\geq \chi_{i}(H)-d_{H}(v)=\epsilon$, then we consider $g_{1}$ is extended to $g_{2}$ by assigning $i$ to $V_{i-d_{G}(v)}$ for each $i=d_{G}(v)+d_{H}(v)+1,\ldots,d_{G}(v)+\chi_{i}(H)$. This defines an injective coloring of $F_{k}$ using $\chi_{i}(G)$ colors. Hence, $\chi_{i}(F_{k})=\chi_{i}(G)$. Letting $\vartheta<\epsilon$ we need to consider two more cases depending on $\vartheta$.

\smallskip
\noindent
\textbf{Subcase 1.1.} $\vartheta=0$. \\
If $\epsilon\leq d_{G}(v)-1$, then in order to extend $g_{1}$, we respectively assign the colors $2,\ldots,\epsilon+1$ to $V_{d_{H}(v)+1},\ldots,V_{\chi_{i}(H)}$. Note that the resulting function is an injective coloring of $F_{k}$ using $\chi_{i}(G)$ colors, and hence $\chi_{i}(F_{k})=\chi_{i}(G)$. Suppose now that $\epsilon\geq d_{G}(v)$. As an extension of $g_1$, we first respectively assign the colors $2,\ldots,d_{G}(v)$ to $V_{d_{H}(v)+1},\ldots,V_{d_{H}(v)+d_{G}(v)-1}$. We next give $\epsilon-d_{G}(v)+1$ new colors to the rest of open packings in $H$. The resulting function is an injective coloring of $F_{k}$ using
\begin{equation}\label{EQ1}
\chi_{i}(G)+\epsilon-d_{G}(v)+1=\chi_{i}(G)+\chi_{i}(H)-d_{H}(v)-d_{G}(v)+1=\chi_{i}(H)+1
\end{equation}
colors (since $\chi_{i}(G)-d_{G}(v)-d_{H}(v)=\vartheta=0$). We infer, in this case, that $\chi_{i}(F_{k})\in \{\chi_{i}(H),\chi_{i}(H)+1\}$ due to (\ref{max}).

\smallskip
\noindent
\textbf{Subcase 1.2.} $\vartheta>0$. \\
Let $g_{2}$ be an extension of $g_{1}$ such that the color $i$ is assigned to $V_{i-d_{G}(v)}$ for each $i=d_{G}(v)+d_{H}(v)+1,\ldots,\chi_{i}(G)$. In such a situation, $\varsigma=\chi_{i}(H)-\chi_{i}(G)+d_{G}(v)\geq1$ open packings in $H$ have not been colored under $g_{2}$. We consider two cases depending on $\chi_{i}(G)$ and $\chi_{i}(H)$.

\smallskip
\noindent
\textbf{Subcase 1.2.1.} $\chi_{i}(G)\geq \chi_{i}(H)$. \\
This shows that $\varsigma\leq d_{G}(v)$. Let $\varsigma\leq d_{G}(v)-1$. In such a situation, $g_{2}$ can be extended to $g_{3}$ by assigning $\varsigma$ colors from $\{2,\ldots,d_{G}(v)\}$ to the rest of open packings in $H$. The resulting coloring is injective and uses $\chi_{i}(G)$ colors. So, $\chi_{i}(F_{k})=\chi_{i}(G)$. If $\varsigma=d_{G}(v)$, then $g_{3}$ can be extended to $F_{k}$ by assigning a new color to the last open packing in $H$. Therefore, $\chi_{i}(F_{k})\in \{\chi_{i}(G),\chi_{i}(G)+1\}$.

\smallskip
\noindent
\textbf{Subcase 1.2.2.} $\chi_{i}(H)>\chi_{i}(G)$. \\
We then have $\varsigma>d_{G}(v)$. In this situation, let $g_{3}$ be an extension of $g_{2}$ that respectively assigsn the colors $2,\ldots,d_{G}(v)$ to $V_{\chi_{i}(G)-d_{G}(v)+1},\ldots,V_{\chi_{i}(G)-1}$. We now assign $\chi_{i}(H)-\chi_{i}(G)+1$ new colors to the remaining open packings in $H$. This leads to the existence of an injective coloring of $F_{k}$ with $\chi_{i}(H)+1$ colors. Therefore, $\chi_{i}(F_{k})\in \{\chi_{i}(H),\chi_{i}(H)+1\}$.

\medskip
\noindent
\textbf{Case 2.} $\chi_{i}(G)<d_{H}(v)+d_{G}(v)$. \\
Let first $\sigma=\chi_{i}(G)-d_{G}(v)=0$. Assume now that $\chi_{i}(H)=d_{H}(v)$. Let $g_1$ be an extension of $g$ which respectively assigns the colors $d_{G}(v)+1,d_{G}(v)+2,\ldots,d_{H}(v)$ to $V_{1}\setminus \{v\},V_{2},\ldots,V_{d_{H}(v)}$. Hence, $g_{1}$ is an injective coloring of $F_{k}$ with $d_{G}(v)+d_{H}(v)$ colors, and so $\chi_{i}(F_{k})=d_{G}(v)+d_{H}(v)$. Suppose now that $\chi_{i}(H)>d_{H}(v)$. If $\chi_{i}(H) < d_{G}(v)+d_{H}(v)$, then $g_{1}$ can be extended by assigning $\chi_{i}(H)-d_{H}(v)$ colors from $\{2,\ldots,d_{G}(v)\}$ to the rest of open packings in $H$. This gives an injective coloring of $F_{k}$ using $d_{G}(v)+d_{H}(v)$ colors, and therefore $\chi_{i}(F_{k})=d_{G}(v)+d_{H}(v)$. Now let $\chi_{i}(H)\geq d_{G}(v)+d_{H}(v)$. In such a situation, as an extension of $g_{1}$, we first respectively give $2,\ldots,d_{G}(v)$ colors to $V_{d_{H}(v)+1},\ldots,V_{d_{G}(v)+d_{H}(v)-1}$. We next assign $\chi_{i}(H)-d_{G}(v)-d_{H}(v)+1$ new colors to the rest of open packings in $H$. This leads to an injective coloring of $F_{k}$ with $\chi_{i}(H)+1$ colors, and therefore $\chi_{i}(F_{k})\in \{\chi_{i}(H),\chi_{i}(H)+1\}$.

Consider now that $\sigma>0$. Note that $g$ can be extended to a function $g_{1}$ by respectively assigning $d_{G}(v)+1,d_{G}(v)+2,\ldots,\chi_{i}(G)$ to $V_{1}\setminus \{v\},V_{2},\ldots,V_{\sigma}$, as well as, $d_{H}(v)-\sigma$ new colors to $V_{\sigma+1},\ldots,V_{d_{H}(v)}$. If $\chi_{i}(H)=d_{H}(v)$, then this defines an injective coloring of $F_{k}$ using $\chi_{i}(G)+d_{H}(v)-\sigma=d_{G}(v)+d_{H}(v)$ colors. Therefore, $\chi_{i}(F_{k})=d_{G}(v)+d_{H}(v)$. So, let $\chi_{i}(H)>d_{H}(v)$. Again, we need to consider two more possibilities.

\smallskip
\noindent
\textbf{Subcase 2.1.} $\chi_{i}(H)<d_{G}(v)+d_{H}(v)$. \\
In view of this, let $g_{2}$ be an extension of $g_{1}$ to $F_{k}$ by giving $\chi_{i}(H)-d_{H}(v)$ colors from $\{2,\ldots,d_{G}(v)\}$ to the rest of open packings in $H$. This process injectively colors $F_{k}$ by $d_{G}(v)+d_{H}(v)$ colors. So, we again have $\chi_{i}(F_{k})=d_{G}(v)+d_{H}(v)$.

\smallskip
\noindent
\textbf{Subcase 2.2.} $\chi_{i}(H)\geq d_{G}(v)+d_{H}(v)$. \\
Respectively assigning the colors $2,\ldots,d_{G}(v)$ to $V_{d_{H}(v)+1},\ldots,V_{d_{H}(v)+d_{G}(v)-1}$, as well as, $\chi_{i}(H)-d_{G}(v)-d_{H}(v)+1$ new colors to the rest of open packings in $H$, we obtain an extension of $g_{1}$ to $F_{k}$. It is easy to see that the resulting function is an injective coloring using $\chi_{i}(H)+1$ colors. Therefore, $\chi_{i}(F_{k})\in \{\chi_{i}(H),\chi_{i}(H)+1\}$.
\end{proof}

\begin{lemma}\label{L4}
Let $g(v)\in[d_{G}(v)]$ and $h(v)\notin h\big{(}N_{H}(v)\big{)}$ for each $\chi_{i}(H)$-function $h$. Then,
$$\chi_{i}(F_{k})\in \big{\{}\chi_{i}(G),\chi_{i}(H),\chi_{i}(G)+1,\chi_{i}(H)+1,d_{G}(v)+d_{H}(v),d_{G}(v)+d_{H}(v)+1\big{\}}.$$
\end{lemma}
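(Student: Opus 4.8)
The plan is to mirror the proofs of Lemmas~\ref{L1}--\ref{L3}. Keep the fixed $\chi_i(G)$-coloring $g$ with its partition $\{U_1,\dots,U_{\chi_i(G)}\}$ into open packings and $N_G(v)\subseteq U_1\cup\cdots\cup U_{d_G(v)}$; using $g(v)\in[d_G(v)]$ we may reindex so that $v\in U_1$, hence $U_1$ already contains a neighbour of $v$. Pick a $\chi_i(H)$-function $h=(V_1,\dots,V_{\chi_i(H)})$; since $h(v)\notin h(N_H(v))$ for \emph{every} $\chi_i(H)$-function, we may assume $N_H(v)\subseteq V_1\cup\cdots\cup V_{d_H(v)}$ and $v\in V_{d_H(v)+1}$. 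Note the hypotheses give $\chi_i(G)\ge d_G(v)$ and $\chi_i(H)\ge d_H(v)+1$. Now extend $g$ to $F_k$ by colouring $V(H)$ with a relabelling of $h$: assign $g(v)$ to $V_{d_H(v)+1}$ (consistent at the identified vertex $v$); assign to the neighbour classes $V_1,\dots,V_{d_H(v)}$ distinct colours exceeding $d_G(v)$, reusing colours of $\{d_G(v)+1,\dots,\chi_i(G)\}$ whenever possible and introducing fresh colours only when forced; finally assign to the remaining classes $V_{d_H(v)+2},\dots,V_{\chi_i(H)}$ colours chosen first from $\{2,\dots,d_G(v)\}$, then from the still unused colours of $[\chi_i(G)]$, and fresh otherwise.

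That such an extension is injective is the same observation used throughout this section: the only vertex of $F_k$ with neighbours on both sides is $v$, so the colouring is injective as soon as $g$ is injective on $G$, the relabelling of $h$ is injective on $H$, and the colour sets of $N_G(v)$ and $N_H(v)$ are disjoint. The first two hold by construction, and the third holds because $N_G(v)$ occupies exactly the colours $1,\dots,d_G(v)$ while $N_H(v)$ only receives colours exceeding $d_G(v)$. Likewise, giving an $H$-class a colour already used by a class $U_c$ of $G$ creates no conflict: $v$ is not adjacent in $G$ to any vertex of $g$-colour greater than $d_G(v)$, and $v$ is not adjacent in $H$ to any of the remaining classes, so no vertex of $F_k$ is simultaneously adjacent to a $U_c$-vertex and to such an $H$-class. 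With this in hand, a case analysis on the relative sizes of $\chi_i(G)$, $\chi_i(H)$, $d_G(v)$, $d_H(v)$ — organised exactly as Cases~1 and~2 with their subcases in Lemma~\ref{L3} — computes in each regime the number of colours actually used to be one of $\chi_i(G)$, $\chi_i(H)$, $\chi_i(G)+1$, $\chi_i(H)+1$, $d_G(v)+d_H(v)$, $d_G(v)+d_H(v)+1$; together with \eqref{max} this gives the claimed inclusion.

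The main obstacle is purely the bookkeeping of colour availability across the regimes, and pinpointing when a fresh colour is unavoidable. The new feature compared with Lemmas~\ref{L1}--\ref{L3} is that \emph{both} factors are in their awkward configuration at once: the colour $g(v)\in[d_G(v)]$ is already spent on the class $U_1$, which meets $N_G(v)$, so it cannot be recycled for an $H$-class; and simultaneously $v$ is separated from $N_H(v)$ in \emph{every} $\chi_i(H)$-function, so the class $V_{d_H(v)+1}$ must carry the colour $g(v)$ with no ``help'' from $G$. It is exactly this coincidence, in the borderline regimes where $\chi_i(G)$ or $\chi_i(H)$ is near $d_G(v)+d_H(v)$, that may force one of the ``$+1$'' values, and the only delicate point is to check that no regime yields a value outside the listed six.
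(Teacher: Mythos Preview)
Your outline follows the paper's approach—fix $g$, relabel a $\chi_i(H)$-coloring $h$ so that $N_H(v)$ avoids $[d_G(v)]$, and count—but it is only a sketch: the promised case analysis ``organised exactly as Cases~1 and~2 with their subcases in Lemma~\ref{L3}'' is never carried out. Since the hypothesis here ($h(v)\notin h(N_H(v))$) is opposite to that of Lemma~\ref{L3}, the regimes do not line up verbatim, so at minimum you must redo the count for your scheme rather than cite~\ref{L3}.

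There is, however, a genuine and useful difference in your construction. You colour \emph{all} of $V_{d_H(v)+1}$ with $g(v)=1$, whereas the paper gives $V_{d_H(v)+1}\setminus\{v\}$ a fresh colour (e.g.\ $d_G(v)+d_H(v)+1$). Your choice is legitimate: $V_{d_H(v)+1}$ is an $h$-class, so keeping it monochromatic preserves injectivity on $H$; and since $N_H(v)\cap V_{d_H(v)+1}=\emptyset$, no neighbour of $v$ inherits colour $1$ from the $H$-side, so the colour sets of $N_G(v)$ and $N_H(v)$ stay disjoint. This economy of one colour is not cosmetic. If you actually perform the count for your scheme (with $a=\chi_i(G)$, $b=\chi_i(H)$, $p=d_G(v)$, $q=d_H(v)$) you find that the number of colours used is exactly $\max(a,b,p+q)$ in every regime: when $a\ge p+q$ the $q$ neighbour classes and the $b-q-1$ remaining classes all fit inside $[a]$ unless $b>a$, giving $\max(a,b)$; when $a<p+q$ the neighbour classes force $p+q-a$ fresh colours and the remaining classes fit into $\{2,\dots,p\}$ unless $b>p+q$, giving $\max(p+q,b)$. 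Together with \eqref{max} this yields $\chi_i(F_k)=\max\{\chi_i(G),\chi_i(H),d_G(v)+d_H(v)\}$, which is \emph{stronger} than the stated lemma and makes the ``$+1$'' values unnecessary under these hypotheses. The paper's extra colour on $V_{d_H(v)+1}\setminus\{v\}$ is precisely what produces the ``$+1$'' slack in its Case~2 subcases.

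In short: your plan is correct and your relabelling is sharper than the paper's, but to turn this into a proof you must replace the hand-wave to Lemma~\ref{L3} with the two-line count above.
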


\begin{proof}
We first observe, by the assumption given in the statement of the lemma, that $\chi_{i}(H)\geq d_{H}(v)+1$. For the sake of simplicity, we let $N_{H}(v)\subseteq V_{1}\cup \ldots \cup V_{d_{H}(v)}$, $v\in V_{d_{H}(v)+1}$ and $v\in U_{1}$. We again need to distinguish two possibilities depending on $\chi_{i}(G)$, $d_{G}(v)$ and $d_{H}(v)$.

\medskip
\noindent
\textbf{Case 1.} $\chi_{i}(G)\geq d_{H}(v)+d_{G}(v)+1$. \\
Assume first that $\chi_{i}(H)=d_{H}(v)+1$. The extension $g_{1}$, of $g$, that respectively assigns $d_{G}(v)+1,\ldots,d_{G}(v)+d_{H}(v)+1$ to $V_{1},\ldots,V_{d_{H}(v)+1}\setminus \{v\}$ is an injective coloring of $F_{k}$ with $\chi_{i}(G)$ colors. Thus, $\chi_{i}(F_{k})=\chi_{i}(G)$ (note that if $V_{d_{H}(v)+1}\setminus \{v\}=\emptyset$, then the color $d_{G}(v)+d_{H}(v)+1$ is not used in $H$). So, let $\chi_{i}(H)>d_{H}(v)+1$. Again, we need to consider two more cases.

\smallskip
\noindent
\textbf{Subcase 1.2.} $\lambda=\chi_{i}(G)-d_{G}(v)-d_{H}(v)-1\geq \chi_{i}(H)-d_{H}(v)-1=\varepsilon$. \\
We observe that a function $g_{2}$ defined, as an extension of $g_{1}$, by assigning the color $i$ to $V_{i-d_{G}(v)}$ for every $i=d_{G}(v)+d_{H}(v)+2,\ldots,\chi_{i}(H)+d_{G}(v)$ is an injective coloring of $F_{k}$ with $\chi_{i}(G)$ colors. Therefore, $\chi_{i}(F_{k})=\chi_{i}(G)$.

\smallskip
\noindent
\textbf{Subcase 2.2.} $\lambda<\varepsilon$. \\
Suppose first that $\lambda=0$. There exist two possibilities depending on $\chi_{i}(G)$ and $\chi_{i}(H)$.

\smallskip
\noindent
\textbf{Subcase 2.2.1.} $\chi_{i}(G)\geq \chi_{i}(H)$. \\
This implies that $\varepsilon\leq d_{G}(v)$ by taking $\lambda=0$ into account. If $\varepsilon\leq d_{G}(v)-1$, then an extension of $g_{1}$ that assigns $\varepsilon$ colors from $\{2,\ldots,d_{G}(v)\}$ to $V_{d_{H}(v)+2},\ldots,V_{\chi_{i}(H)}$ gives an injective coloring of $F_{k}$ with $\chi_{i}(G)$ colors. Therefore, $\chi_{i}(F_{k})=\chi_{i}(G)$. Now let $\varepsilon=d_{G}(v)$. In such a situation, we first respectively assign $2,\ldots,d_{G}(v)$ colors to $V_{d_{H}(v)+2},\ldots,V_{\chi_{i}(H)-1}$, and a new color to $V_{\chi_{i}(H)}$. The resulting function is an injective coloring of $F_{k}$ with $\chi_{i}(G)+1$ colors. Therefore, $\chi_{i}(F_{k})\in \{\chi_{i}(G),\chi_{i}(G)+1\}$.

\smallskip
\noindent
\textbf{Subcase 2.2.1.} $\chi_{i}(H)>\chi_{i}(G)$. \\
We then have $\varepsilon>d_{G}(v)$ since $\lambda=0$. Let $g_{2}$ be an extension of $g_{1}$ that respectively assigns $2,\ldots,d_{G}(v)$ to $V_{d_{H}(v)+2},\ldots,V_{d_{G}(v)+d_{H}(v)}$. Notice that $\chi_{i}(H)-d_{G}(v)-d_{H}(v)$ open packings in $H$ have not received colors under $g_{2}$. In such a situation, we obtain an injective coloring of $F_{k}$ with $\chi_{i}(H)+1$ colors by assigning $\chi_{i}(H)-d_{G}(v)-d_{H}(v)$ new colors to the remaining open packings. Hence, $\chi_{i}(F_{k})\in \{\chi_{i}(H),\chi_{i}(H)+1\}$.

\smallskip
Assume now that $\lambda>0$. Let $g_{2}$ be an extension of $g_{1}$ that assigns the color $i$ to $V_{i-d_{G}(v)}$ when $i\in \{d_{G}(v)+d_{H}(v)+2,\ldots,\chi_{i}(G)\}$. We note that $\upsilon=\chi_{i}(H)-\chi_{i}(G)+d_{G}(v)$ open packings in $H$ have not received colors under $g_{2}$. If $\chi_{i}(H)\geq \chi_{i}(G)$, then we respectively assign the colors $2,\ldots,d_{G}(v)$ to $V_{\chi_{i}(G)-d_{G}(v)+1},\ldots,V_{\chi_{i}(G)-1}$. Also, we give $\chi_{i}(H)-\chi_{i}(G)+1$ new colors to the rest of open packings in $H$. This leads to an injective coloring of $F_{k}$ using $\chi_{i}(H)+1$ colors, and hence $\chi_{i}(F_{k})\in \{\chi_{i}(H),\chi_{i}(H)+1\}$.

On the other hand, if $\chi_{i}(G)>\chi_{i}(H)$, then $\upsilon<d_{G}(v)$. In such a case, $g_{2}$ can be extended to $F_{k}$ by assigning $\upsilon$ colors from $\{2,\ldots,d_{G}(v)\}$ to the rest of open packings in $H$. This defines an injective coloring of $F_{k}$ with $\chi_{i}(G)$ colors, and therefore $\chi_{i}(F_{k})=\chi_{i}(G)$.

\medskip
\noindent
\textbf{Case 2.} $\chi_{i}(G)<d_{H}(v)+d_{G}(v)+1$. \\
We need to distinguish two more possibilities depending on $\chi_{i}(G)-d_{G}(v)$.

\smallskip
\noindent
\textbf{Subcase 2.1.} $\chi_{i}(G)=d_{G}(v)$. \\
If $\chi_{i}(H)=d_{H}(v)+1$, then assume $g_{1}$ respectively assigns the colors $d_{G}(v)+1,\ldots,d_{G}(v)+d_{H}(v)+1$ to $V_{1},\ldots,V_{d_{H}(v)+1}\setminus \{v\}$ (if $V_{d_{H}(v)+1}\setminus \{v\}=\emptyset$, then the color $d_{G}(v)+d_{H}(v)+1$ is not used in $H$). This gives an injective coloring of $F_{k}$ using at most $d_{G}(v)+d_{H}(v)+1$ colors. This shows that $\chi_{i}(F_{k})\in \{d_{G}(v)+d_{H}(v),d_{G}(v)+d_{H}(v)+1\}$. Let $\chi_{i}(H)>d_{H}(v)+1$. If $\varepsilon<d_{G}(v)$, then $g_{1}$ can be extended as an injective coloring of $F_{k}$ by assigning $\varepsilon$ colors from $\{2,\ldots,d_{G}(v)\}$ to $V_{d_{H}(v)+2},\ldots,V_{\chi_{i}(H)}$. Therefore, $\chi_{i}(F_{k})\in \{d_{G}(v)+d_{H}(v),d_{G}(v)+d_{H}(v)+1\}$. So, we let $\varepsilon \geq d_{G}(v)$. Let $g_{2}$ be an extension of $g_{1}$ that respectively assigns the colors $2,\ldots,d_{G}(v)$ to $V_{d_{H}(v)+2},\ldots,V_{d_{G}(v)+d_{H}(v)}$. We now give $\chi_{i}(H)-d_{G}(v)-d_{H}(v)$ new colors to the rest of open packings in $H$. This process ends with an injective coloring of $F_{k}$ using $\chi_{i}(H)+1$ colors, and hence $\chi_{i}(F_{k})\in \{\chi_{i}(H),\chi_{i}(H)+1\}$.

\smallskip
\noindent
\textbf{Subcase 2.1.} $\chi_{i}(G)>d_{G}(v)$.\\
Let $\chi_{i}(H)=d_{H}(v)+1$. Assume $g_{1}$ is an extension of $g$ that respectively assigns $d_{G}(v)+1,\ldots,\chi_{i}(G)$ to $V_{1},\ldots,V_{\chi_{i}(G)-d_{G}(v)}$. We then give at most $\chi_{i}(H)-\chi_{i}(G)+d_{G}(v)$ new colors to $V_{\chi_{i}(G)-d_{G}(v)+1},\ldots,V_{d_{H}(v)+1}\setminus \{v\}$ (trivially, $V_{d_{H}(v)+1}\setminus \{v\}$ does not receive any color if $V_{d_{H}(v)+1}=\{v\}$). This results in an injective coloring of $F_{k}$ with at most $d_{G}(v)+d_{H}(v)+1$ colors. Therefore, $\chi_{i}(F_{k})\in \{d_{G}(v)+d_{H}(v),d_{G}(v)+d_{H}(v)+1\}$.

Let $\chi_{i}(H)>d_{H}(v)+1$. Consider now that $g_{2}$ extends $g_{1}$ by giving $d_{H}(v)+1-\chi_{i}(G)+d_{G}(v)$ new colors to $V_{\chi_{i}(G)-d_{G}(v)+1},\ldots,V_{d_{H}(v)+1}\setminus \{v\}$ (note that $V_{d_{H}(v)+1}\setminus \{v\}$ does not receive any color if $V_{d_{H}(v)+1}=\{v\}$). In such a situation, $\varepsilon$ open packings in $H$ have not received colors under $g_{2}$. If $\varepsilon<d_{G}(v)$, then we assign $\varepsilon$ colors from $\{2,\ldots,d_{G}(v)\}$ to the rest of open packings in $H$. This defines an injective coloring of $F_{k}$ with at most $d_{G}(v)+d_{H}(v)+1$ colors. Hence, $\chi_{i}(F_{k})\in \{d_{G}(v)+d_{H}(v),d_{G}(v)+d_{H}(v)+1\}$. So, we let $\varepsilon\geq d_{G}(v)$. In this situation, we extend $g_{2}$ by respectively assigning the colors $2,\ldots,d_{G}(v)$ to $V_{d_{H}(v)+2},\ldots,V_{d_{G}(v)+d_{H}(v)}$, as well as, $\chi_{i}(H)-d_{G}(v)-d_{H}(v)$ new colors to the rest of open packings in $H$. This leads to the existence of an injective coloring of $F_{k}$ with at most $\chi_{i}(H)+1$ colors. Thus, $\chi_{i}(F_{k})\in \{\chi_{i}(H),\chi_{i}(H)+1\}$. This completes the proof.
\end{proof}

Altogether, Lemmas \ref{L1}--\ref{L4} imply that
\begin{equation}\label{four-lemma}
\chi_{i}(F_{k})\in \big{\{}\chi_{i}(G),\chi_{i}(H),\chi_{i}(G)+1,\chi_{i}(H)+1,d_{G}(v)+d_{H}(v),d_{G}(v)+d_{H}(v)+1\big{\}}
\end{equation}
for each $k\in[n]$ and any graphs $G$ and $H$ with $v\in V(H)$.

For every $k\in[n]$, by renaming the colors assigned to $V(H_{k})\setminus \{v_{k}\}$ if necessary, we may assume that the optimal injective coloring of $F_{k}$ uses the colors from $[\chi_{i}(F_{k})]$. Recall that such an injective coloring uses the colors from $[\chi_{i}(G)]$ in $G$.

We observe that $V(G\circ_{v}H)=\bigcup_{k=1}^{n}V(F_{k})$ and that $V(F_{i})\cap V(F_{j})=V(G)$ for every distinct $i,j\in[n]$. Assume in the rest that $F\in \{F_{1},\ldots,F_{n}\}$ has the property that $$\chi_{i}(F)=\max_{k\in[n]} \{\chi_{i}(F_{k})\}.$$
With these notations in mind, we prove the following simple but useful lemma.

\begin{lemma}\label{L5}
$\chi_{i}(F)\in \big{\{}\chi_{i}(G),\chi_{i}(H),\chi_{i}(G)+1,\chi_{i}(G)+1,\Delta(G)+d_{H}(v),\Delta(G)+d_{H}(v)+1\big{\}}$.
\end{lemma}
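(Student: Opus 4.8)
The plan is to read off Lemma~\ref{L5} from the pointwise classification \eqref{four-lemma}, exploiting the extremal defining property of $F$ together with the lower bound \eqref{max} evaluated at a vertex of maximum degree of $G$. There is no new computation to do here: the whole content is that among the ``local'' values $d_G(v_k)+d_H(v)$ and $d_G(v_k)+d_H(v)+1$, the largest possible ones are obtained by choosing $v_k$ of maximum degree, and the maximizing subgraph $F$ is forced close to that top value.

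First I would fix an index $j\in[n]$ with $d_G(v_j)=\Delta(G)$, which exists since $G$ has a maximum-degree vertex. Applying \eqref{max} to $F_j$ — recalling that the symbol $d_G(v)$ there stands for $d_G(v_j)$, as the root of $H_j$ is identified with $v_j$ — gives $\chi_i(F_j)\ge d_G(v_j)+d_H(v)=\Delta(G)+d_H(v)$. Since $F$ is by definition a subgraph $F_k$ achieving $\chi_i(F)=\max_{k\in[n]}\chi_i(F_k)$, this yields at once
$$\chi_i(F)\ \ge\ \chi_i(F_j)\ \ge\ \Delta(G)+d_H(v),$$
which I will call $(\star)$. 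Next I would apply \eqref{four-lemma} to $F$ itself: writing $v'$ for the vertex of $G$ with which the root of the copy of $H$ inside $F$ is identified, we get
$$\chi_i(F)\in\{\chi_i(G),\chi_i(H),\chi_i(G)+1,\chi_i(H)+1,\ d_G(v')+d_H(v),\ d_G(v')+d_H(v)+1\}.$$
If $\chi_i(F)$ equals one of the first four values, we are done immediately. Otherwise $\chi_i(F)\in\{d_G(v')+d_H(v),\,d_G(v')+d_H(v)+1\}$, and since $d_G(v')\le\Delta(G)$ this forces $\chi_i(F)\le\Delta(G)+d_H(v)+1$; combining with $(\star)$ pins $\chi_i(F)$ down to $\Delta(G)+d_H(v)$ or $\Delta(G)+d_H(v)+1$. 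In every case $\chi_i(F)$ lies in the claimed six-element set.

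The ``hard part'' is purely a matter of bookkeeping rather than mathematics: one has to keep straight that in \eqref{max} and \eqref{four-lemma} the letter $v$ simultaneously denotes the root of $H$ and the vertex $v_k\in V(G)$ it is glued to, so that the degree term appearing there is genuinely $d_G(v_k)$, and then notice that passing to a maximum-degree vertex of $G$ is precisely what converts the index-dependent terms $d_G(v_k)+d_H(v)$ and $d_G(v_k)+d_H(v)+1$ into the index-free terms $\Delta(G)+d_H(v)$ and $\Delta(G)+d_H(v)+1$ in the statement. (Note also that the $\chi_i(G)+1$ listed twice in the statement should read $\chi_i(G)+1,\ \chi_i(H)+1$, as the argument above shows $\chi_i(H)+1$ is genuinely attainable via \eqref{four-lemma}.)
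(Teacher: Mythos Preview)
Your proof is correct and follows essentially the same approach as the paper: both arguments apply \eqref{four-lemma} to $F$ itself and combine it with the lower bound $\chi_i(F)\ge\chi_i(F_j)\ge\Delta(G)+d_H(v)$ coming from \eqref{max} at a maximum-degree vertex $v_j$. The paper phrases the final step as a contradiction while you argue directly, but the content is identical; your observation about the typo ($\chi_i(G)+1$ listed twice in place of $\chi_i(H)+1$) is also correct.
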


\begin{proof}
Since $F=F_{k}$ for some $k\in[n]$, we have
\begin{equation*}
\chi_{i}(F)\in \big{\{}\chi_{i}(G),\chi_{i}(H),\chi_{i}(G)+1,\chi_{i}(H)+1,d_{G}(v_{k})+d_{H}(v),d_{G}(v_{k})+d_{H}(v)+1\big{\}}.
\end{equation*}
by (\ref{four-lemma}). Moreover, for each vertex $v_{j}$ in $G$ of maximum degree, (\ref{four-lemma}) implies that
\begin{equation*}
\chi_{i}(F_{j})\in \big{\{}\chi_{i}(G),\chi_{i}(H),\chi_{i}(G)+1,\chi_{i}(H)+1,\Delta(G)+d_{H}(v),\Delta(G)+d_{H}(v)+1\big{\}}=M_{\Delta}.
\end{equation*}

Suppose to the contrary that $\chi_{i}(F)\notin M_{\Delta}$. This necessarily implies that $\chi_{i}(F)\geq \chi_{i}(G)+2$ and that $\chi_{i}(F)\geq \chi_{i}(H)+2$. If $\chi_{i}(F)=d_{G}(v_{k})+d_{H}(v)$, then $d_{G}(v_{k})+d_{H}(v)=\chi_{i}(F)\geq \chi_{i}(F_{j})\geq \Delta(G)+d_{H}(v)$. This necessarily implies that $d_{G}(v_{k})=\Delta(G)$, contradicting the supposition $\chi_{i}(F)\notin M_{\Delta}$. Therefore, $\chi_{i}(F)=d_{G}(v_{k})+d_{H}(v)+1$. Similarly, we have $d_{G}(v_{k})+d_{H}(v)+1=\chi_{i}(F)\geq \chi_{i}(F_{j})\geq \Delta(G)+d_{H}(v)$. Hence, $d_{G}(v_{k})+1\in \{\Delta(G),\Delta(G)+1\}$, contradicting $\chi_{i}(F)\notin M_{\Delta}$. So, the statement of the lemma holds.
\end{proof}

We are now in a position to prove the main result of this section.

\begin{theorem}\label{rooted}
For any graph $G$ and any graph $H$ with root $v\in V(H)$,
$$\chi_{i}(G\circ_{v}H)\in \big{\{}\chi_{i}(G),\chi_{i}(H),\chi_{i}(G)+1,\chi_{i}(H)+1,\Delta(G)+d_{H}(v),\Delta(G)+d_{H}(v)+1\big{\}}.$$
\end{theorem}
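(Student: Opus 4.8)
The plan is to establish the sharper equality $\chi_i(G\circ_v H)=\chi_i(F)$, where $F$ is the subgraph singled out just before Lemma~\ref{L5}, and then invoke Lemma~\ref{L5}. One inequality is free: $F$ is an induced subgraph of $G\circ_v H$, so $\chi_i(G\circ_v H)\ge\chi_i(F)$. All of the work therefore lies in producing an injective coloring of $G\circ_v H$ that uses only $\chi_i(F)$ colors.

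To build such a coloring I would glue the colorings of the individual subgraphs $F_1,\dots,F_n$ obtained through Lemmas~\ref{L1}--\ref{L4}. Recall that those were all constructed as extensions of one fixed $\chi_i(G)$-coloring $g$ of $G$, and that, after the renaming noted right after \eqref{four-lemma}, each may be taken to use colors from $[\chi_i(F_k)]\subseteq[\chi_i(F)]$ while restricting to $g$ on $V(G)$. Since $V(G\circ_v H)=\bigcup_{k=1}^n V(F_k)$ and $V(F_i)\cap V(F_j)=V(G)$ whenever $i\ne j$, and since every one of the chosen colorings agrees with $g$ on $V(G)$, they patch together into a single well-defined coloring $c$ of $G\circ_v H$ that uses at most $\max_{k}\chi_i(F_k)=\chi_i(F)$ colors.

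The reason the patched coloring $c$ is injective is a localization property of neighborhoods: every vertex of $G\circ_v H$ has its whole neighborhood inside a single copy $V(F_k)$. Indeed, a vertex $x\in V(H_k)\setminus\{v_k\}$ has no incident $G$-edge, so $N(x)\subseteq V(H_k)\subseteq V(F_k)$; and a vertex $x=v_j\in V(G)$ satisfies $N(x)=N_G(v_j)\cup N_{H_j}(v)\subseteq V(G)\cup V(H_j)=V(F_j)$. Hence verifying that two neighbors of $x$ receive different colors under $c$ amounts to verifying it inside the single subgraph $F_k$ that contains $N(x)$, where $c$ coincides with the injective coloring of $F_k$ supplied above. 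Thus $c$ is injective, so $\chi_i(G\circ_v H)\le\chi_i(F)$, and together with the trivial lower bound we get $\chi_i(G\circ_v H)=\chi_i(F)$; Lemma~\ref{L5} then immediately places this common value in the six-element set claimed by the theorem.

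The step to be careful with is not the gluing but the color count: one must guarantee that for \emph{every} $k$ there is an injective coloring of $F_k$ that simultaneously extends the one fixed coloring $g$ and uses at most $\chi_i(F)$ colors. For the indices $k$ not attaining $\chi_i(F)$ this is clear from \eqref{four-lemma}, while for the index (or indices) attaining it one relies on the explicit $g$-extensions exhibited in Lemmas~\ref{L1}--\ref{L4}; in the subcases where those extensions a priori cost one extra color, one has to observe that an optimal $g$-extension is nonetheless available. Once this bookkeeping is in place, no further computation is required.
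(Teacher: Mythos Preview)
Your approach is essentially identical to the paper's: both prove $\chi_i(G\circ_v H)=\chi_i(F)$ by gluing the $g$-extending colorings of the $F_k$ built in Lemmas~\ref{L1}--\ref{L4}, verify injectivity by observing that every neighborhood lies inside a single $F_k$, and then invoke Lemma~\ref{L5}. Your final paragraph is in fact more scrupulous than the paper about the one delicate point---whether each constructed coloring really uses at most $\chi_i(F)$ colors---which the paper handles only by the terse (and slightly imprecise) phrase ``Let $f_j$ be a $\chi_i(F_j)$-coloring \ldots\ as constructed along the proofs of Lemmas~\ref{L1}--\ref{L4}.''
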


\begin{proof}
Note that $\chi_{i}(G\circ_{v}H)\geq \chi_{i}(F)$ as $F=F_{k}$ is a subgraph of $G\circ_{v}H$. Let $f_{j}$ be a $\chi_{i}(F_{j})$-coloring for each $j\in[n]$, as constructed along the proofs of Lemmas \ref{L1}--\ref{L4}. We now define $f$ on $V(G\circ_{v}H)$, as an extension of $g$, by $f(x)=f_{j}(x)$ when $x\in V(F_{j})$. Notice that $f$ is well-defined because $f_{i}(x)=f_{j}(x)$ for each $x\in V(G)$ and every distinct $i,j\in[n]$.

Suppose to the contrary that there exist distinct vertices $x,y,z\in V(G\circ_{v}H)$ such that $y,z\in N_{G\circ_{v}H}(x)$ and $f(y)=f(z)$. Since the restrictions of $f$ to $V(G)$ and $V(H_{j})$, are injective colorings for each $j\in[n]$, it follows that neither ``$y,z\in V(G)$" nor ``$y,z\in V(H_{j})$ for some $j\in[n]$" happens. So, without loss of generality, we may assume that $z\in V(G)$ and $y\in V(H_{j})$ for some $j\in[n]$. By the structure, this necessarily implies that $x=v=v_{j}$. This contradicts the fact that $f_{j}$ is an injective coloring of $F_{j}$. So, we deduce that $f$ is an injective coloring of $G\circ_{v}H$.

Recall that for every $j\in[n]$, $f_{j}$ assigns the colors in $[\chi_{i}(F_{j})]$ so as to injectively color $F_{j}$. Due to this fact, we observe that $f$ assigns $\chi_{i}(F)$ colors to $V(G\circ_{v}H)$, and hence $\chi_{i}(G\circ_{v}H)\leq \chi_{i}(F)$. This leads to the desired inequality $\chi_{i}(G\circ_{v}H)=\chi_{i}(F)\in \{\chi_{i}(G),\chi_{i}(H),\chi_{i}(G)+1,\chi_{i}(H)+1,\Delta(G)+d_{H}(v),\Delta(G)+d_{H}(v)+1\}$ by Lemma \ref{L5}.
\end{proof}

It can be readily seen that the six possible values for $\chi_{i}(G\circ_{v}H)$ given in Theorem \ref{rooted} can indeed be presented in the following way.

\begin{corollary}
For any graph $G$ and any graph $H$ with root $v\in V(H)$,
{\small
$$\max\left\{\chi_{i}(G),\chi_{i}(H),\Delta(G)+d_{H}(v)\right\}\le \chi_{i}(G\circ_{v}H)\le \max\left\{\chi_{i}(G),\chi_{i}(H),\Delta(G)+d_{H}(v)\right\}+1.$$}
\end{corollary}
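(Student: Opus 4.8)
The plan is to read this corollary off directly from Theorem~\ref{rooted}: that theorem already confines $\chi_{i}(G\circ_{v}H)$ to a list of six explicit values, and the corollary merely repackages this list as a two-term interval, so no essentially new argument is required.

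First I would prove the lower bound $\chi_{i}(G\circ_{v}H)\ge M$, writing $M=\max\{\chi_{i}(G),\chi_{i}(H),\Delta(G)+d_{H}(v)\}$. Since $\chi_{i}$ does not increase when passing to a subgraph (the same observation already used for \eqref{max}), and both $G$ and each copy $H_{k}$ of $H$ sit inside $G\circ_{v}H$ as subgraphs, we get $\chi_{i}(G\circ_{v}H)\ge\max\{\chi_{i}(G),\chi_{i}(H)\}$. For the remaining term, choose $k$ so that $v_{k}$ has maximum degree in $G$; in $G\circ_{v}H$ the vertex $v_{k}$, being identified with the root of $H_{k}$, has degree $d_{G}(v_{k})+d_{H}(v)=\Delta(G)+d_{H}(v)$, whence $\Delta(G\circ_{v}H)\ge\Delta(G)+d_{H}(v)$ and therefore $\chi_{i}(G\circ_{v}H)\ge\Delta(G\circ_{v}H)\ge\Delta(G)+d_{H}(v)$. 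Combining these three inequalities gives $\chi_{i}(G\circ_{v}H)\ge M$.

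Next I would prove the upper bound $\chi_{i}(G\circ_{v}H)\le M+1$. By Theorem~\ref{rooted}, $\chi_{i}(G\circ_{v}H)$ equals one of $\chi_{i}(G)$, $\chi_{i}(H)$, $\chi_{i}(G)+1$, $\chi_{i}(H)+1$, $\Delta(G)+d_{H}(v)$, $\Delta(G)+d_{H}(v)+1$. Each of $\chi_{i}(G)$, $\chi_{i}(H)$, $\Delta(G)+d_{H}(v)$ is at most $M$ by definition of $M$, and each of the other three values is one of these three quantities increased by $1$, hence at most $M+1$. So in every case $\chi_{i}(G\circ_{v}H)\le M+1$, completing the argument.

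I do not expect any genuine obstacle here: all the substantive work---the case analysis ruling out intermediate values---is already carried out in Lemmas~\ref{L1}--\ref{L4}, Lemma~\ref{L5}, and Theorem~\ref{rooted}. The only minor point requiring care is matching the three ``base'' quantities $\chi_{i}(G)$, $\chi_{i}(H)$, $\Delta(G)+d_{H}(v)$ of the interval precisely with the structure of the six-value list, in particular confirming via the subgraph-and-degree argument above that $\Delta(G)+d_{H}(v)$ is indeed a valid lower bound for $\chi_{i}(G\circ_{v}H)$.
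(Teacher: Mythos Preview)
Your proposal is correct and mirrors the paper's approach: the paper simply states that the corollary ``can be readily seen'' from Theorem~\ref{rooted}, with the lower bound already implicit in \eqref{eq:trivial-bounds-rooted} and \eqref{max}, and you have spelled out exactly these ingredients. No difference in strategy.
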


In what follows, we show that $\chi_{i}(G\circ_{v}H)$ can indeed reach each of the six values appearing in the closed formula of Theorem \ref{rooted}, depending on our choice for $G$ and $H$. Suppose first that $G\cong K_{n}$ on $n\geq3$ vertices and let $H$ be obtained from $K_{m}$, with $m\geq3$, by joining a new vertex $v$ to only one vertex of $K_{m}$. It is then readily checked that $\chi_{i}(G\circ_{v}H)=n=\chi_{i}(G)$ if $n\geq m$, and that $\chi_{i}(G\circ_{v}H)=m=\chi_{i}(H)$ if $m>n$. Let $G\cong K_{1,a}$ and $H\cong K_{1,b}$ for some integers $a,b\geq1$. It is then clear that $\chi_{i}(G\circ_{v}H)=a+b=\Delta(G)+d_{H}(v)$, in which $v$ is the center of $H$. This in particular shows that $\chi_{i}(G\circ_{v}H)=a+b=\chi_{i}(G)+1$ when $b=1$. Let $G\cong C_{4n}$ and $H\cong K_{m}$ for some integers $n\geq1$ and $m\geq3$. Recall that $\chi_{i}(C_{4n})=2$. We then have $\chi_{i}(G\circ_{v}H)=m+1=\chi_{i}(H)+1$. Finally, let $G\cong K_{n}$ and $H\cong K_{m}$ for some integers $m,n\geq3$, in which $v$ is any vertex of $K_{m}$. It is easy to see that $\chi_{i}(G\circ_{v}H)=n+m-1=\Delta(G)+d_{H}(v)+1$.

\subsection{Corona products viewed as rooted products}

Let $G$ and $H$ be graphs where $V(G)=\{v_1,\ldots,v_{n}\}$. The \textit{corona product} $G\odot H$ of the graphs $G$ and $H$ is obtained from the disjoint union of $G$ and $n$ disjoint copies of $H$, say $H_1,\ldots,H_{n}$, such that $v_i\in V(G)$ is adjacent to all vertices of $H_i$ for each $i\in[n]$. Recall that the \textit{join} of graphs $G$ and $H$, written $G\vee H$, is a graph obtained from the disjoint union $G$ and $H$ by adding the edges $\{gh\mid g\in V(G)\ \mbox{and}\ h\in V(H)\}$.

As an immediate consequence of Theorem \ref{rooted}, we obtain the closed formula for the injective chromatic number of corona product graphs given in \cite{SSY}. To do so, we need some routine observations. Let $G$ and $H$ have no isolated vertices. Moreover, we may assume that they are connected. We observe that $G\odot H$ is isomorphic to $G\circ_{v}(K_{1}\vee H)$, in which the root $v$ is the unique vertex of $K_{1}$. Due to this and the fact that $\chi_{i}(K_{1}\vee H)=|V(H)|+1=d_{K_{1}\vee H}(v)+1$, Theorem \ref{rooted} implies that $\chi_{i}(G\odot H)$ belongs to the set
\begin{equation}\label{R1}
\big{\{}\chi_{i}(G),|V(H)|+1,\chi_{i}(G)+1,|V(H)|+2,\Delta(G)+|V(H)|,\Delta(G)+|V(H)|+1\big{\}}.
\end{equation}

On the other hand, it is a routine matter to see that $\chi_{i}(K_{2}\odot H)=|V(H)|+1=\Delta(K_{2})+|V(H)|$. In view of this, we may assume that $\Delta(G)\geq2$. Since $\chi_{i}(G\odot H)\geq \Delta(G\odot H)=\Delta(G)+|V(H)|$, it follows that $|V(H)|+1$ can be excluded from the set in (\ref{R1}). By a similar fashion, $|V(H)|+2$ can also be excluded when $\Delta(G)\geq2$.

We observe, in view of Lemmas \ref{L1}--\ref{L4}, that the equality $\chi_{i}(G\odot H)=\chi_{i}(G)+1$ may only occur in Lemma \ref{L4} (note that $h(v)\notin h\big{(}N_{K_{1}\vee H}(v)\big{)}$, for each $\chi_{i}(K_{1}\vee H)$-function $h$, since $H$ has no isolated vertices). Suppose now that $\chi_{i}(G\odot H)=\chi_{i}(G)+1$. By the proof of Lemma \ref{L4}, it only happens when $\chi_{i}(K_{1}\vee H)>|V(H)|+1$, which is a contradiction.

\begin{corollary}\label{SSYr}\emph{(\cite{SSY})}
For any graphs $G$ and $H$ with no isolated vertices,
\begin{center}
$\chi_{i}(G\odot H)\in \big{\{}\chi_{i}(G),|V(H)|+\Delta(G),|V(H)|+\Delta(G)+1\big{\}}$.
\end{center}
\end{corollary}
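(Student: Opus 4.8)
The plan is to read the corollary off Theorem~\ref{rooted} applied to a rooted-product presentation of the corona, and then to prune the resulting six-element set down to three. We may first assume that $G$ and $H$ are connected: the general case reduces to this one because $\chi_{i}$ of a disjoint union is the maximum of the $\chi_{i}$'s of the components, and the same holds for $\Delta$, so the asserted three-value set for $G\odot H$ follows from the ones for the pieces.

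Next I would set up the presentation $G\odot H\cong G\circ_{v}(K_{1}\vee H)$, where $v$ is the unique vertex of $K_{1}$; hence $d_{K_{1}\vee H}(v)=|V(H)|$. I would then verify $\chi_{i}(K_{1}\vee H)=|V(H)|+1$: the vertex $v$ is adjacent to all of $V(H)$, which forces $|V(H)|$ pairwise distinct colors there, and since $H$ has no isolated vertex, every vertex of $H$ sees $v$ together with at least one of its $H$-neighbours, so $v$ must avoid every color used on $V(H)$; conversely $|V(H)|+1$ colors obviously suffice. Plugging $\chi_{i}(K_{1}\vee H)=|V(H)|+1$ and $d_{K_{1}\vee H}(v)=|V(H)|$ into Theorem~\ref{rooted} yields
\[
\chi_{i}(G\odot H)\in\bigl\{\chi_{i}(G),\,|V(H)|+1,\,\chi_{i}(G)+1,\,|V(H)|+2,\,\Delta(G)+|V(H)|,\,\Delta(G)+|V(H)|+1\bigr\}.
\]

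Now I would remove three of these six values. If $\Delta(G)=1$ then $G\cong K_{2}$, and a direct check gives $\chi_{i}(K_{2}\odot H)=|V(H)|+1=\Delta(G)+|V(H)|$, which already lies in the claimed set; so assume $\Delta(G)\ge 2$. Then $\chi_{i}(G\odot H)\ge\Delta(G\odot H)=\Delta(G)+|V(H)|\ge|V(H)|+2$, which rules out the values $|V(H)|+1$ and $|V(H)|+2$ (the latter being at most $\Delta(G)+|V(H)|$). The remaining value to eliminate is $\chi_{i}(G)+1$, and this is the step I expect to be the main obstacle, since it is not a cardinality argument: $\chi_{i}(G)$ may be much larger than $\Delta(G)$, so $\chi_{i}(G)+1$ need not be comparable with $\Delta(G)+|V(H)|$.

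To kill $\chi_{i}(G)+1$ I would go back into the case analysis behind Theorem~\ref{rooted}. Because $H$ has no isolated vertex, $h(v)\notin h\bigl(N_{K_{1}\vee H}(v)\bigr)$ for every $\chi_{i}(K_{1}\vee H)$-function $h$; hence only Lemmas~\ref{L2} and~\ref{L4} can be invoked when computing the relevant $\chi_{i}(F_{k})$, and of these only Lemma~\ref{L4} even lists $\chi_{i}(G)+1$ among the possible values. Suppose then that $\chi_{i}(G\odot H)=\chi_{i}(G)+1$; since $\chi_{i}(G\odot H)=\chi_{i}(F)=\max_{k}\chi_{i}(F_{k})$, some $F_{k}$ has $\chi_{i}(F_{k})=\chi_{i}(G)+1$, which by the above must arise through Lemma~\ref{L4}. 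Tracing the proof of Lemma~\ref{L4}, the value $\chi_{i}(G)+1$ is produced only in the subcase with $\chi_{i}(K_{1}\vee H)>d_{K_{1}\vee H}(v)+1=|V(H)|+1$, contradicting $\chi_{i}(K_{1}\vee H)=|V(H)|+1$. Therefore $\chi_{i}(G\odot H)\ne\chi_{i}(G)+1$, and we conclude $\chi_{i}(G\odot H)\in\{\chi_{i}(G),\,\Delta(G)+|V(H)|,\,\Delta(G)+|V(H)|+1\}$, as claimed.
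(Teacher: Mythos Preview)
Your argument is correct and follows essentially the same route as the paper: represent $G\odot H$ as $G\circ_{v}(K_{1}\vee H)$, compute $\chi_{i}(K_{1}\vee H)=|V(H)|+1=d_{K_{1}\vee H}(v)+1$, apply Theorem~\ref{rooted}, discard $|V(H)|+1$ and $|V(H)|+2$ via the lower bound $\chi_{i}(G\odot H)\ge\Delta(G)+|V(H)|$ once $\Delta(G)\ge 2$, and eliminate $\chi_{i}(G)+1$ by observing that $h(v)\notin h(N_{K_{1}\vee H}(v))$ forces the analysis into Lemmas~\ref{L2} or~\ref{L4}, where $\chi_{i}(G)+1$ arises only under $\chi_{i}(K_{1}\vee H)>d_{K_{1}\vee H}(v)+1$. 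The only cosmetic additions are your explicit reduction to connected components and the detailed verification of $\chi_{i}(K_{1}\vee H)=|V(H)|+1$, both of which the paper handles in a single sentence.
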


\section{Kneser graphs}
\label{sec:Kneser}

For positive integers $n$ and $r$, where $n\geq 2r$, the {\em Kneser graph} $K(n,r)$ has the $r$-subsets of an $n$-set as its vertices and two vertices are adjacent in $K(n,r)$ if the corresponding sets are disjoint. Kneser graphs are among the most studies classes of graphs, since the two classical results concerning their independence and chromatic numbers were proved roughly half a century ago~\cite{ekr-61,lov-78}. In two recent papers~\cite{bcdh,coto}, the $2$-packing numbers of Kneser graphs were studied, and we will use some results from these papers for finding the open packing numbers and discussing their perfect injectively colorability.

It is well known and easy to see that ${\rm diam}(K(n,r))=2$ if and only if $n\ge 3r-1$. This immediately gives $\rho_2(K(n,r))=1$ if and only if $n\ge 3r-1$. Now, we invoke the result about perfect injectively colorable graphs with diameter $2$ from~\cite{BSY}.

\begin{proposition} {\rm (\cite[Proposition 13]{BSY})}
If $G$ is a graph with $\diam(G) = 2$, then $G$ is a perfect injectively colorable graph if and only if either each edge of $G$ lies in a triangle, or there exists a perfect matching $M$ in $G$ such that no edge of $M$ lies in a triangle.
\label{prp:diam2}
\end{proposition}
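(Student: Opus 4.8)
The plan is to reduce the statement to a description of the open packings of a graph of diameter $2$, and then read off the two alternatives directly from the definition of perfect injective colorability. The first thing to establish is that $\opack(G)\le 2$. Indeed, if $\diam(G)=2$ and $u,v$ are distinct vertices at distance $2$, they have a common neighbour, so $N(u)\cap N(v)\ne\emptyset$ and $\{u,v\}$ cannot lie in a common open packing; hence every open packing of $G$ consists of pairwise adjacent vertices. If some open packing $B$ had $|B|\ge 3$, then three of its vertices would form a triangle, and two of those would share a common neighbour, contradicting that $B$ is an open packing. So $\opack(G)\le 2$. Next one checks that a two-element set $\{u,v\}$ is an open packing if and only if $uv\in E(G)$ and $uv$ lies in no triangle of $G$, since for an edge $uv$ the equality $N(u)\cap N(v)=\emptyset$ says exactly that no vertex is adjacent to both endpoints. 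Consequently $\opack(G)=2$ precisely when $G$ has an edge lying in no triangle, while $\opack(G)=1$ precisely when every edge of $G$ lies in a triangle.

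With this in hand, split into two cases. If every edge of $G$ lies in a triangle, then $\opack(G)=1$, and assigning a distinct colour to each vertex is (vacuously) an injective colouring in which every colour class is an open packing of cardinality $1=\opack(G)$; thus $G$ is perfect injectively colorable, which is the first alternative. If instead some edge lies in no triangle, then $\opack(G)=2$, and a perfect injective colouring is, by definition, a partition of $V(G)$ into colour classes each of which is a two-element open packing; by the characterization above this is exactly a perfect matching $M$ of $G$ none of whose edges lies in a triangle. Conversely, any such $M$ automatically yields an injective colouring: were a vertex $x$ adjacent to both endpoints of an edge $yz\in M$, then $xyz$ would be a triangle containing $yz$, contradicting the choice of $M$. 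So in this case $G$ is perfect injectively colorable if and only if such an $M$ exists, which is the second alternative. Combining the two cases gives the claimed equivalence.

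I expect the substantive step to be the structural fact that $\opack(G)\le 2$ together with the exact description of two-element open packings as triangle-free edges, both of which rely on the hypothesis $\diam(G)=2$. Once these are in place, the proof is a straightforward unwinding of the definition of perfect injective colorability, the only point requiring a little care being the observation that a perfect matching avoiding all triangles induces an injective colouring for free --- this is precisely where the triangle-freeness of $M$ is used.
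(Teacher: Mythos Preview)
Your argument is correct. The key structural observation that in a diameter-$2$ graph every open packing has size at most~$2$, together with the identification of two-element open packings as precisely the triangle-free edges, is exactly what drives the equivalence, and your case split handles both alternatives cleanly.

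Note, however, that the paper does not supply its own proof of this proposition: it is quoted verbatim from~\cite{BSY} (as Proposition~13 there) and used as a black box to deduce Observation~\ref{obser} about Kneser graphs. So there is no in-paper argument to compare against. Your proof is self-contained and would serve perfectly well as a proof of the cited result; the reasoning you give is the natural one and almost certainly matches what appears in~\cite{BSY}.
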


If $n\ge 3r$, then every edge of $K(n,r)$ clearly lies in a triangle, hence by Proposition~\ref{prp:diam2}, it is a perfect injectively colorable graph. Now, if $n=3r-1$, we claim that $K(n,r)$ has a perfect matching. Indeed, one can see this by using the recent result from~\cite{mmn-2023+} that all Kneser graphs with the sole exception of the Petersen graph are Hamiltonian, and the fact that $\binom{3r-1}{r}$ is an even number. Thus, since $K(n,r)$ has no triangles, we infer by Proposition~\ref{prp:diam2} again, that $K(n,r)$ is a perfect injectively colorable graph.  We state the obtained remarks as follows.

\begin{observation}\label{obser}
If $n\ge 3r-1$, then $K(n,r)$ is a perfect injectively colorable graph.
\end{observation}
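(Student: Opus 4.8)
The plan is to split the claim into the two regimes $n \ge 3r$ and $n = 3r-1$, and in each case reduce to Proposition~\ref{prp:diam2} after checking that $\diam(K(n,r)) = 2$. First I would record the standard fact that $\diam(K(n,r)) = 2$ precisely when $n \ge 3r-1$: if $n \ge 3r-1$ then any two $r$-subsets $A, B$ with $A \cap B \ne \emptyset$ have $|A \cup B| \le 2r-1 \le n-r$, so there is an $r$-set disjoint from both, giving distance $2$; and for $n \le 3r-2$ one exhibits two intersecting sets whose union already has size $> n-r$, so they have no common neighbor and the graph has diameter larger than $2$ (or is complete when $n = 2r$, which is excluded for $r \ge 2$, and trivial for $r=1$). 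This lets me apply Proposition~\ref{prp:diam2} throughout.

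For $n \ge 3r$, I would show directly that every edge lies in a triangle: given disjoint $r$-sets $A, B$, the set $[n] \setminus (A \cup B)$ has size $n - 2r \ge r$, so it contains an $r$-subset $C$, and $\{A,B,C\}$ is a triangle. Proposition~\ref{prp:diam2} then immediately gives that $K(n,r)$ is perfect injectively colorable. For the boundary case $n = 3r-1$, the graph is triangle-free (two $r$-sets and a third set pairwise disjoint would need $3r \le n = 3r-1$), so the first alternative of Proposition~\ref{prp:diam2} is impossible, and I must instead produce a perfect matching $M$ in which no edge lies in a triangle — but since the graph is triangle-free, \emph{any} perfect matching works. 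So the task collapses to: $K(3r-1, r)$ has a perfect matching. For this I would invoke two facts: (i) $\binom{3r-1}{r}$ is even for every $r \ge 1$ (a short Kummer/Lucas-type argument on binomial coefficients mod $2$, or a direct parity check), so the vertex count is even; and (ii) every Kneser graph except the Petersen graph $K(5,2)$ is Hamiltonian by the result of~\cite{mmn-2023+} — and $K(5,2)$ is not of the form $K(3r-1,r)$, so $K(3r-1,r)$ is Hamiltonian and hence, having an even order, contains a perfect matching.

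Assembling these pieces: for $n \ge 3r$ use the triangle-covering alternative, and for $n = 3r-1$ use the perfect-matching alternative via Hamiltonicity and the parity of $\binom{3r-1}{r}$; in both cases Proposition~\ref{prp:diam2} yields the conclusion. The main obstacle — and the only genuinely non-routine input — is establishing that $K(3r-1,r)$ has a perfect matching; everything else is elementary set-counting. I would handle this by leaning on the cited Hamiltonicity theorem, which reduces it to the parity claim $2 \mid \binom{3r-1}{r}$, the one small computation I would actually carry out (writing $\binom{3r-1}{r} = \frac{3r-1}{r}\binom{3r-2}{r-1}$ or using that the $2$-adic valuation of $\binom{3r-1}{r}$ equals the number of carries when adding $r$ and $2r-1$ in base $2$, which is positive since the top bits of $r$ and $2r-1$ overlap).
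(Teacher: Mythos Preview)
Your approach is exactly the paper's: split into $n\ge 3r$ (every edge in a triangle) and $n=3r-1$ (triangle-free, so exhibit a perfect matching via Hamiltonicity and the parity of $\binom{3r-1}{r}$), then apply Proposition~\ref{prp:diam2} in both cases.

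There is one slip, however: you assert that $K(5,2)$ is not of the form $K(3r-1,r)$, but it is precisely this for $r=2$. So the Hamiltonicity theorem of~\cite{mmn-2023+} does \emph{not} cover $K(5,2)$, and your argument as written leaves a gap at $r=2$. The fix is immediate---the Petersen graph certainly has a perfect matching (it is $3$-regular and bridgeless, or one can simply write one down)---but you should handle this case separately rather than claim it does not arise. The paper glosses over this point too, so you are in good company, but since you made the exclusion explicit you should correct it.
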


In~\cite{coto}, the authors studied the Kneser graphs $K(n,r)$ with $n=3r-2$, which are in a sense the closest to diameter-$2$ Kneser graphs. They obtained the exact values of the $2$-packing number for all these Kneser graphs as follows:
\begin{equation}\label{2-pack}
\rho_2\big{(}K(3r-2,r)\big{)}=
        \begin{cases}
            7 & \mbox{if}\ r=3,\\
            5 & \mbox{if}\ r=4,\\
            3 & \mbox{if}\ r\ge 5.
        \end{cases}
\end{equation}

Let $S$ be an open packing of $K(3r-2,r)$, and suppose that $S$ is not a $2$-packing. Therefore, there exist vertices $u$ and $v$ in $S$, which are adjacent in $K(n,r)$. Without loss of generality, let $u=[r]$ and $v=\{r+1,\ldots,2r\}$. Since $\rho(G)\leq \rho^{o}(G)$ for all graphs $G$, the equality (\ref{2-pack}) implies that there exists a vertex $w\in S\setminus \{u,v\}$. Clearly, $d(w,u)>2$ and d$(w,v)>2$. In particular, $w\cap u\neq \emptyset$. Suppose that $|w\cap u|\geq 2$. Then $|w\cup u|\leq2r-2$. Since $n=3r-2$, we infer that there exists a vertex $x$ which is adjacent to both $w$ and $u$, a contradiction to d$(w,u)>2$. Therefore, $|w\cap u|=1$, and by a similar argument $|w\cap v|=1$. This yields $\{2r+1,\ldots,3r-2\}\subset w$. Hence, if $w'$ is any other vertex in $S$, we infer that $|w\cap w'|\geq r-2$. In the case $r>3$, this gives $|w\cap w'|\ge 2$ implying $|w\cup w'|\leq r+2$, yet this yields that $w$ and $w'$ have a common neighbor, which is impossible. We have thus shown that every maximum open packing is a $2$-packing in $K(3r-2,r)$ for $r>3$, and hence $\rho^o\big{(}K(3r-2,r)\big{)}=\rho_2\big{(}K(3r-2,r)\big{)}$ in this case.

If $r=3$, then $\rho^o\big{(}K(3r-2,r)\big{)}\geq7$ by (\ref{2-pack}). Notice that every vertex in $S\setminus \{u,v\}$ is of the form $w=\{a,b,7\}$, in which $a\in u$ and $b\in v$. Moreover, $|w\cap w'|=1$ for any two vertices $w,w'\in S\setminus \{u,v\}$ as d$(w,w')>2$. Hence, we can add at most three vertices to $u=\{1,2,3\}$ and $v=\{4,5,6\}$ in order to get an open packing. This contradicts that fact that $\rho^o\big{(}K(3r-2,r)\big{)}\geq7$. In fact, we have proved that every maximum open packing in $K(3r-2,r)$ is a $2$-packing. In particular, we have
\begin{equation}\label{open pack}
\rho^o\big{(}K(3r-2,r)\big{)}=
\begin{cases}
7 &  \mbox{if}\ r=3,\\
5 &  \mbox{if}\ r=4,\\
3 &  \mbox{if}\ r\ge 5.
\end{cases}
\end{equation}
by the equality (\ref{2-pack}).

Note that Observation \ref{obser} is in a sense best possible as there exists a Kneser graph $K(n,r)$ with $n=3r-2$, for some positive integer $r$, which is not perfect injectively colorable.

\begin{proposition}
Kneser graph $K(7,3)$ is not perfect injectively colorable.
\end{proposition}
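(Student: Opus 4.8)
The plan is to identify exactly which subsets of $V(K(7,3))$ can serve as maximum open packings, and then to reduce the statement to the classical non-existence of a large set of Steiner triple systems of order $7$.

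First, record the arithmetic. By (\ref{open pack}) we have $\rho^o(K(7,3))=7$, while $|V(K(7,3))|=\binom{7}{3}=35=5\cdot 7$. Since every colour class of an injective coloring is an open packing, it contains at most $\rho^o(K(7,3))=7$ vertices, so every injective coloring of $K(7,3)$ uses at least five colours; in particular, $K(7,3)$ is perfect injectively colorable if and only if $V(K(7,3))$ can be partitioned into exactly five open packings, each of cardinality $7$.

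Second, pin down the structure of such a maximum open packing $S$. By the analysis preceding (\ref{open pack}), $S$ is a $2$-packing, so any two of its members are at distance at least $3$ in $K(7,3)$. Regarding the vertices as $3$-subsets of $[7]$, two such sets $A,B$ are adjacent precisely when $A\cap B=\emptyset$, and they have a common neighbour precisely when $|A\cap B|\ge 2$ (equivalently $|A\cup B|\le 4$, so that $[7]\setminus(A\cup B)$ contains a triple disjoint from both). Hence $S$ is a family of $7$ triples of $[7]$ that pairwise meet in exactly one point. Consequently no pair of points of $[7]$ can lie in two members of $S$, and counting incident (pair of points, member) pairs yields $7\cdot\binom{3}{2}=21=\binom{7}{2}$; therefore every pair of points of $[7]$ lies in exactly one member of $S$, i.e., $S$ is a $2$-$(7,3,1)$ design, which is the line set of a Fano plane on $[7]$.

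Combining the two steps, a perfect injective coloring of $K(7,3)$ would be precisely a partition of all $35$ triples of $[7]$ into five Fano planes, that is, a large set of five pairwise disjoint Steiner triple systems of order $7$. It is a classical fact (already known to Cayley) that no such large set exists; indeed, the maximum number of pairwise disjoint copies of $STS(7)$ is two. This contradiction shows that $K(7,3)$ is not perfect injectively colorable. The only non-routine ingredient here is this last non-existence statement; should a self-contained argument be preferred to a citation, it already suffices to rule out three pairwise disjoint Fano planes on $[7]$, which can be done by a short finite case analysis based on the facts that two distinct lines of a single Fano plane meet in exactly one point while two disjoint triples of $[7]$ have a union of size $6$.
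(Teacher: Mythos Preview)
Your proof is correct and follows essentially the same route as the paper's: identify each maximum open packing of $K(7,3)$ with a Fano plane on $[7]$, and then reduce perfect injective colorability to partitioning all $35$ triples into five disjoint Fano planes, which Cayley showed is impossible. Your argument is in fact more explicit than the paper's in deriving the Fano-plane structure (via the pair-counting $7\binom{3}{2}=\binom{7}{2}$), whereas the paper simply asserts this correspondence; otherwise the two proofs coincide.
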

\begin{proof}
Note that a maximum $2$-packing $P$ in $K(7,3)$ consists of seven $3$-subsets of the set $[7]$, where each $i\in [7]$ appears in exactly three of these seven subsets. Thus, $P$ corresponds to the Fano plane.

Suppose to the contrary that $K(7,3)$ admits an injective coloring such that each color class is a maximum open packing. Hence, all color classes are of cardinality $7$ by (\ref{open pack}), and by the remark preceding the proposition, we infer that every color class is a $2$-packing of cardinality $7$. Therefore, there exists a $2$-distance coloring of $K(7,3)$ with $5$ colors such that each color class has $7$ vertices. In particular, we infer $\chi_2(K(7,3)=5$. This is a contradiction as the fact that one cannot partition $V\big{(}K(7,3)\big{)}$ into five Fano planes goes back to Cayley~\cite{ca-1850}. Therefore, $K(7,3)$ is not a perfect injectively colorable graph.
\end{proof}

We remark that the exact value $\chi_2(K(7,3))=6$ was proved in~\cite{kn-2004},

\section{Concluding remarks}
\label{sec:conclud}

Edge clique covers have been extensively investigated so far, see the survey~\cite{schwartz-2022}. On the other hand, the concept of sparse edge clique covers, which turned out to be very useful for our purpose, seems to be a new notion. We believe that such notion deserves an independent interest.

In Section~\ref{sec:Sierpinski} we have briefly considered the generalized Sierpi\'nski graphs over cycles, that is, the graphs $S_{C_k}^n$. The results presented indicate that an investigation of the injective chromatic number of generalized Sierpi\'nski graphs deserves attention, in particular describing those that are perfect injectively colorable. Notice that this task also requires the study of the open packing number of generalized Sierpi\'nski graphs.

In Section \ref{sec:rooted} we have considered the rooted product graphs that can be seen as an instance of the operation called Sierpi\'nski product (see \cite{kpzz-2022}). In this sense, it is of interest to continue investigating the injective chromatic number of other Sierpi\'nski products. In addition, the open packing number of such graphs is worthy of attention.

In Section~\ref{sec:Kneser}, we have shown that Kneser graphs $K(n,r)$ are perfect injectively colorable as soon as $n\ge 3r-1$, and that $K(7,3)$ is not a perfect injectively colorable graph. The latter graph is the only Kneser graph for which we know that it is not perfect injectively colorable, and it would be interesting to determine for which $r>3$ graphs $K(3r-2,r)$ are (not) perfect injectively colorable. The same question can be posed for the odd graphs (Kneser graphs of the form $K(2r+1,r)$) and Kneser graphs in general.

\section*{Acknowledgements}

This work was supported by the Slovenian Research and Innovation Agency ARIS (research core funding P1-0297 and projects J1-3002, J1-4008, N1-0285, N1-0218). I.\ G.\ Yero has been partially supported by the Spanish Ministry of Science and Innovation through the grant PID2022-139543OB-C41.

\section*{Declaration of interests}

The authors declare that they have no known competing financial interests or personal relationships that could have appeared to influence the work reported in this paper.

\section*{Data availability}
Our manuscript has no associated data.

\end{document}